\pdfoutput=1
\documentclass[1p,fleqn,preprint]{elsarticle}

\usepackage{amsmath,amssymb,amsthm}
\usepackage[mathscr]{eucal}
\usepackage{mathtools}
\usepackage{graphicx}
\usepackage{subfigure}

\usepackage{ifpdf}
\ifpdf
\usepackage[bookmarks=false,
pdfstartview=FitH,linkbordercolor={0.5 1 1},
citebordercolor={0.5 1 0.5},unicode,
hyperindex]{hyperref}
\else
\usepackage{breakurl}
\fi

\biboptions{numbers,sort&compress}

\newtheorem{theorem}{Theorem}
\newtheorem{lemma}{Lemma}
\newtheorem{corollary}{Corollary}
\newtheorem{example}{Example}

\newcommand{\bR}{\mathbb{R}}
\newcommand{\bZ}{\mathbb{Z}}
\newcommand{\setJ}{\mathscr{J}}
\newcommand{\setO}{\mathscr{O}}
\newcommand{\setU}{\mathscr{U}}

%


\makeatletter
\def\ps@pprintTitle{%
     \let\@oddhead\@empty
     \let\@evenhead\@empty
     \def\@oddfoot{\footnotesize\itshape\hfill\today}%
     \let\@evenfoot\@oddfoot}
\makeatother

\begin{document}

\begin{frontmatter}

\title{Hardy type asymptotics for cosine series in several variables with
decreasing power-like coefficients}

\author{Victor Kozyakin}

\address{Institute for Information Transmission
Problems\\ Russian Academy of Sciences\\ Bolshoj Karetny lane 19, Moscow
127994 GSP-4, Russia}

\ead{kozyakin@iitp.ru}
\ead[url]{http://www.iitp.ru/en/users/46.htm}

\begin{abstract}
The investigation of the asymptotic behavior of trigonometric series near
the origin is a prominent topic in mathematical analysis. For trigonometric
series in one variable, this problem was exhaustively studied by various
authors in a series of publications dating back to the work of G. H. Hardy,
1928.

Trigonometric series in several variables have got less attention. The aim
of the work is to partially fill this gap by finding the asymptotics of
trigonometric series in several variables with the terms, having a form of
`one minus the cosine' up to a decreasing power-like factor:
\[
\sum_{z\in\mathbb{Z}^{d}\setminus\{0\}}\frac{1}{\|z\|^{d+\alpha}}(1-\cos\langle
z,\theta\rangle), \qquad \theta\in\mathbb{R}^{d},
\]
where $\langle\cdot,\cdot\rangle$ is the standard inner product and
$\|\cdot\|$ is the max-norm on $\mathbb{R}^{d}$.

The approach developed in the paper is quite elementary and essentially
algebraic. It does not rely on the classic machinery of the asymptotic
analysis such as slowly varying functions, Tauberian theorems or the Abel
transform. However, in our case, it allows to obtain explicit expressions
for the asymptotics and to extend to the general case $d\ge 1$ classical
results of G. H. Hardy and other authors known for $d=1$.
\end{abstract}

\begin{keyword}
Trigonometric series in several variables\sep Power-like coefficients\sep
Decreasing coefficients\sep Asymptotic behavior at zero\sep Hardy type
asymptotics

\PACS 02.30.Lt \sep 02.30.Nw

\MSC[2010] 42A32\sep 42A10\sep 42A16\sep 42B05
\end{keyword}

\end{frontmatter}

\section{Introduction}

Given a real number $\alpha>0$, consider the function
\begin{equation}\label{E:defFd}
F_{d}(\theta):=\sum_{z\in\bZ^{d}\setminus\{0\}}\frac{1}{\|z\|^{d+\alpha}}(1-\cos\langle z,\theta\rangle),
\qquad \theta\in\bR^{d}.
\end{equation}
Here $\bZ^{d}$ is the lattice of points from $\mathbb{R}^{d}$ with integer
coordinates, $\langle\cdot,\cdot\rangle$ is the standard inner product and
$\|\cdot\|$ is the max-norm on $\mathbb{R}^{d}$ defined by
\[
\langle x,y\rangle=x_{1}y_{1}+x_{2}y_{2}+\cdots+x_{d}y_{d},\quad
\|x\|=\max\{|x_{1}|,|x_{2}|,\ldots,|x_{d}|\},
\]
where $x=\{x_{1},x_{2},\ldots,x_{d}\}$, $y=\{y_{1},y_{2},\ldots,y_{d}\}$.

The series in \eqref{E:defFd} is uniformly convergent for any $\alpha>0$ and
therefore the function $F_{d}(\theta)$ is non-negative and continuous, and
$F_{d}(0)=0$. We will be interested in study of the asymptotic behavior of
$F_{d}(\theta)$ as $\theta\to 0$. An example of the function $F_{d}(\theta)$
for $d=2$ is plotted in Fig.~\ref{F-cossum2}.

\begin{figure}[!htbp]
\begin{center}
\includegraphics[width=0.5\textwidth]{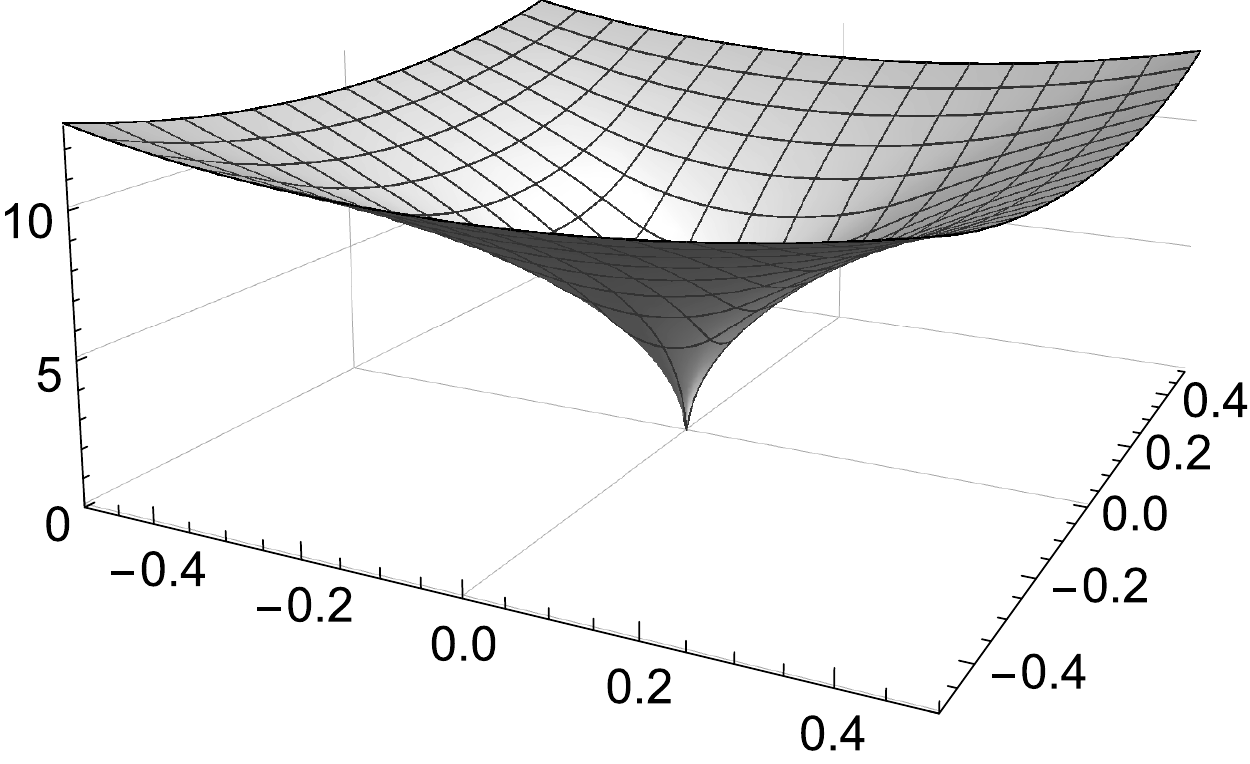}
\caption{Graph of the function $F_{2}(\theta)$ for $\alpha=0.5$.}\label{F-cossum2}
\end{center}
\end{figure}

For $d=1$, the function $F_{d}(\theta)$ can be represented in the form
$F_{d}(\theta)=2H_{\alpha}(\theta)$, where
\begin{equation}\label{E:Chi-def}
H_{\alpha}(\theta):=\sum_{n=1}^{\infty}\frac{1}{n^{1+\alpha}}(1-\cos n\theta),\quad
\theta\in\bR,
\end{equation}
and its asymptotics as $\theta\to 0$ can be described with the help of
classical results going back to the work of G.~H.~Hardy \cite{Hardy:LJMS28}
in which it was shown that for the functions
\[
f(\theta)=\sum_{n=1}^{\infty}a_{n}\cos n\theta,\qquad
g(\theta)=\sum_{n=1}^{\infty}a_{n}\sin n\theta,
\]
where $0<\alpha<1$ and $n^{\alpha}a_{n}\to 1$, the following asymptotics as
$\theta\to 0+$ are valid:
\begin{equation}\label{E:hardy}
f(\theta)\simeq
\Gamma(1-\alpha)\sin\Bigl(\frac{\pi\alpha}{2}\Bigr)\theta^{\alpha-1},\qquad
g(\theta)\simeq
\Gamma(1-\alpha)\cos\Bigl(\frac{\pi\alpha}{2}\Bigr)\theta^{\alpha-1}.
\end{equation}
Here $\Gamma(\cdot)$ is the gamma function, and the notation
$h_{1}(\theta)\simeq h_{2}(\theta)$ as $\theta\to \theta_{0}$, for functions
$h_{1}(\theta)$ and $h_{2}(\theta)$, means that
$h_{1}(\theta)/h_{2}(\theta)\to 1$ as $\theta\to \theta_{0}$.

If $\alpha=1$, that is, $na_{n}\to 1$ then instead of \eqref{E:hardy} there
are valid the following limit relations: $f(\theta)\simeq\log (1/|\theta|)$
and $g(\theta) \to \frac{\pi}{2}$ as $\theta\to 0+$ \cite{Hardy:PLMS31}.
S.~Aljan\v{c}i\'{c}, R.~Bojani\'{c} and M.~Tomi\'{c} proved in
\cite{ABT:PIMA56} that the second of asymptotics \eqref{E:hardy} is valid
also for $0<\alpha<2$, and C.~H.~Yong in \cite{Yong:JMAA71,Yong:JMAA72}
analyzed the asymptotic behavior of the function $f(\theta)$ for all
$\alpha>1$ and of the function $g(\theta)$ for all $\alpha\ge 2$.

So, in \cite{Hardy:LJMS28,Hardy:PLMS31,ABT:PIMA56,Yong:JMAA71,Yong:JMAA72}
the asymptotic behavior of the functions $f(\theta)$ and $g(\theta)$ has been
completely investigated for all $\alpha>0$. In particular, from
\cite{Yong:JMAA71,Yong:JMAA72} it follows that
\begin{equation}\label{E:asympF1}
H_{\alpha}(\theta)\simeq H^{*}_{\alpha}(\theta):=
\begin{dcases}
\frac{1}{\alpha}\Gamma(1-\alpha)\cos\Bigl(\frac{\pi\alpha}{2}\Bigr)|\theta|^{\alpha}&
\text{for~} 0<\alpha<2,\\
\frac{1}{2}\theta^{2}\log\frac{1}{|\theta|}&
\text{for~} \alpha=2,\\
\frac{1}{2}\zeta(\alpha-1)\theta^{2}&
\text{for~} \alpha>2,
\end{dcases}
\end{equation}
where $\zeta(s)=\sum_{n=1}^{\infty}\frac{1}{n^{s}}$ is the Riemann zeta
function. The quantity $\Gamma(1-\alpha)\cos(\frac{\pi\alpha}{2})$ in
\eqref{E:asympF1} is indeterminate when $\alpha=1$ since $\Gamma(0)=\infty$
and $\cos(\frac{\pi}{2})=0$. This indeterminate form can be resolved by
treating $\Gamma(1-\alpha)\cos(\frac{\pi\alpha}{2})$ in the case $\alpha=1$
as
$\lim_{\alpha\to1-}\Gamma(1-\alpha)\cos(\frac{\pi\alpha}{2})=\frac{\pi}{2}$.
This indeterminate form can be resolved also with the help of the identity
$\Gamma(1-\alpha)\cos(\frac{\pi\alpha}{2})\equiv
\frac{\pi}{2\Gamma(\alpha)\sin(\frac{\pi\alpha}{2})}$.

Note that in works
\cite{Hardy:LJMS28,Hardy:PLMS31,ABT:PIMA56,Yong:JMAA71,Yong:JMAA72}, as well
as in subsequent publications
\cite{HR:QJM45,Nurcombe:JMAA93,ChenChen:JMAA00,ChenChen:JMAA00-II,Tikhonov:JMAA07},
one can find quite a number of deeper results than those mentioned earlier,
part of which are included in monograph \cite[Ch.~V]{Zygmund02}.

From \eqref{E:asympF1} it follows that $F_{1}(\theta)\simeq
2H^{*}_{\alpha}(\theta)$ for each $\alpha>0$. Behavior of the functions
$F_{1}(\theta)$ and $2H^{*}_{\alpha}(\theta)$ is illustrated in
Fig.~\ref{F:FdH}.
\begin{figure}[htbp!]
\centering
\hfill\subfigure[$\alpha=0.5$]{\includegraphics*[width=0.3\textwidth]{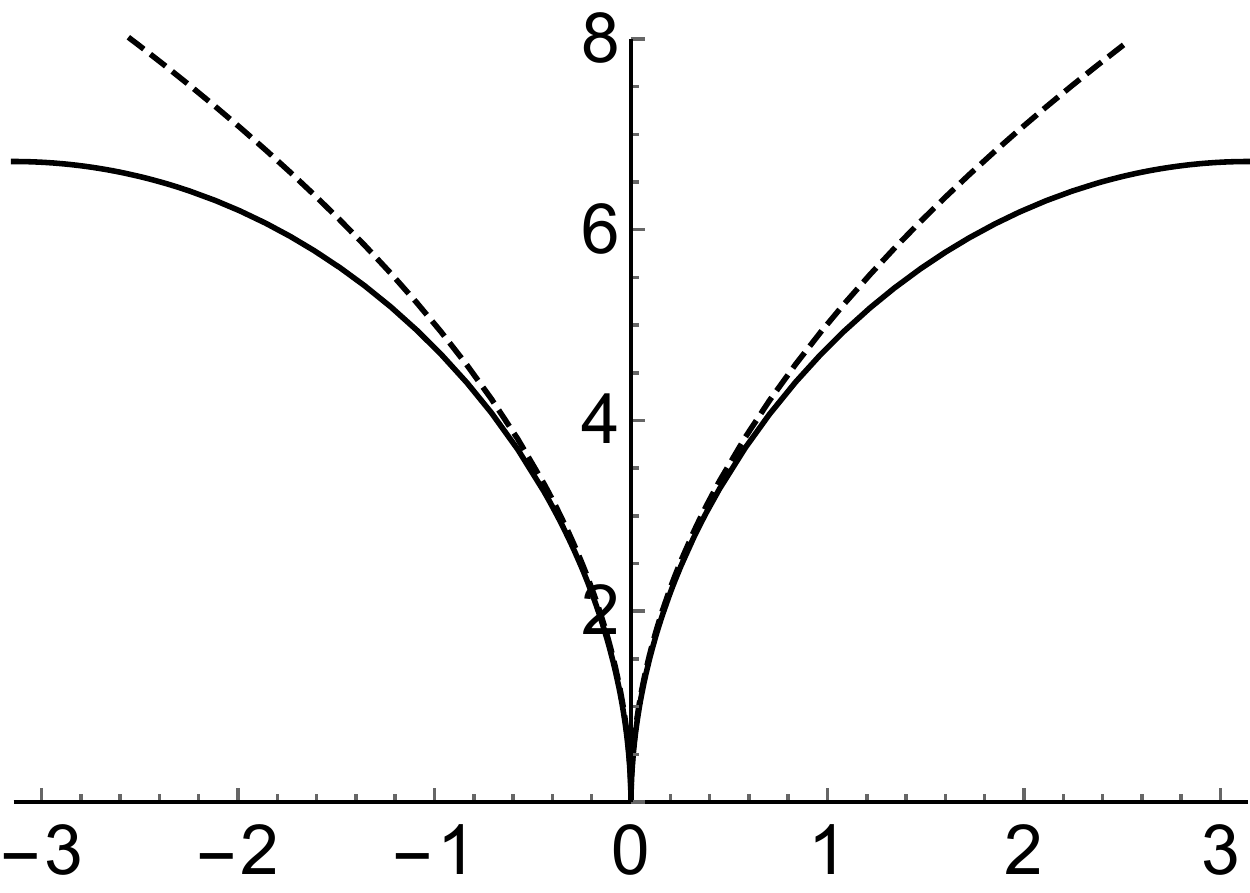}}
\hfill\subfigure[$\alpha=1.0$]{\includegraphics*[width=0.3\textwidth]{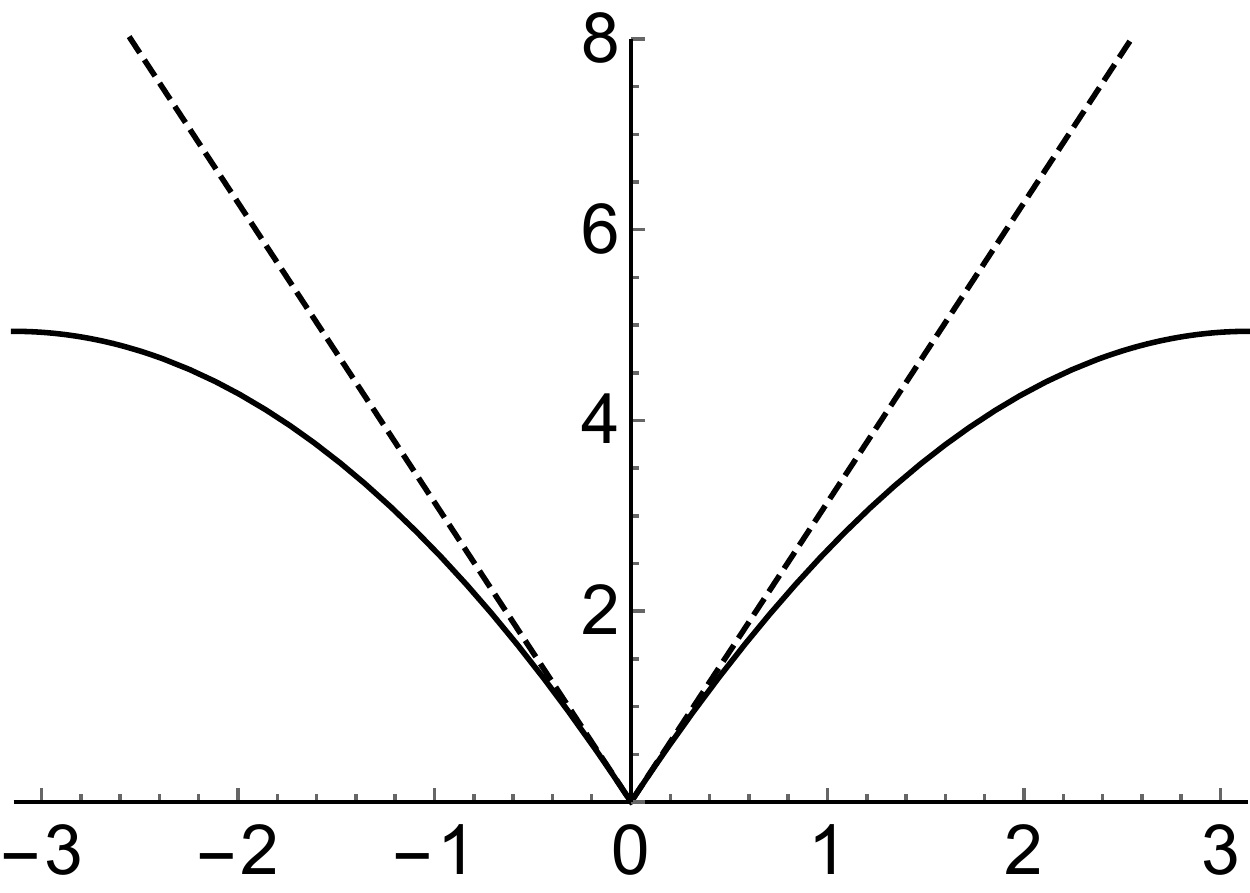}}
\hfill\subfigure[$\alpha=1.5$]{\includegraphics*[width=0.3\textwidth]{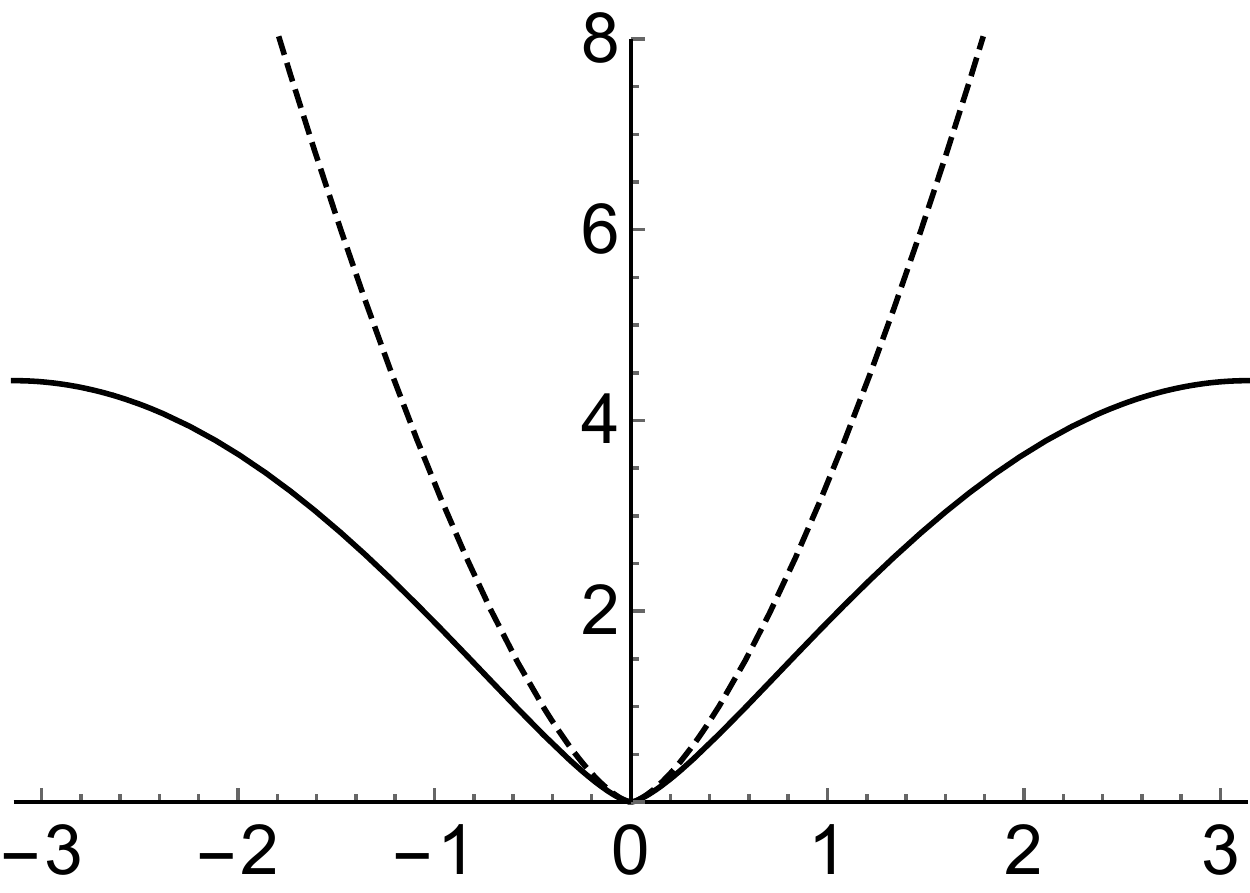}}
\hfill\mbox{}
\caption{Graphs of $F_{1}(\theta)$ (solid line) and
$2H^{*}_{\alpha}(\theta)$ (dash line) for some values of the parameter $\alpha$.}\label{F:FdH}
\end{figure}

For $d\ge 2$ the asymptotic behavior of the function $F_{d}(\theta)$ at zero
is much less studied. We would like to mention that for the function
$F_{2}(\theta)$ E.~Yarovaya \cite{Yar:CSTM13,Yar:NPSMDA14} obtained a lower
bound of order $\|\theta\|^{\alpha}$ near the origin. A similar result for
$F_{3}(\theta)$ has been established by A.~Rytova, a student of E.~Yarovaya
(unpublished).

In connection with this the aim of the work is to prove in
Section~\ref{S:mainresults} Theorems~\ref{T:1}--\ref{T:genas} which provide
an explicit description of the asymptotic behavior of the function
$F_{d}(\theta)$ as $\theta\to0$ for all $d\ge2$.

It is not clear whether it is possible to apply, in the case $d\ge2$, the
methods developed in
\cite{Hardy:LJMS28,Hardy:PLMS31,ABT:PIMA56,Yong:JMAA71,Yong:JMAA72,HR:QJM45,Nurcombe:JMAA93,ChenChen:JMAA00,ChenChen:JMAA00-II,Tikhonov:JMAA07}
for studying the asymptotics of trigonometric series due to their
one-dimensional specificity. Therefore in our case we use the approach of
``reduction to dimension one'' allowing to express the function
$F_{d}(\theta)$ as an explicit combination of the one-dimensional functions
$F_{1}(\cdot)$, or rather of the functions $H_{\alpha}(\cdot)$. This gives an
opportunity to reduce the analysis of the case $d\ge2$ to the case $d=1$. A
similar idea was used in \cite{Yar:CSTM13,Yar:NPSMDA14}.

Outline the structure of the work. In this section we have explained the
motivation for the analysis of asymptotic behavior of the series
\eqref{E:defFd} in several variables, and have presented a concise review of
the publications related to this problem. In Section~\ref{S:mainresults} the
main results are formulated. Theorem~\ref{T:1} there, about representation of
the function $F_{d}(\theta)$ in the form of a finite integral combination of
the functions $H_{\alpha}(\theta)$ with different values of the parameter
$\alpha$, plays the key role. This theorem reduces the analysis of the
behavior of the function $F_{d}(\theta)$ for $d\ge2$ to the case $d=1$. With
the help of Theorem~\ref{T:1}, in Theorems~\ref{T:2} and \ref{T:3} an
explicit form of the asymptotics for $F_{d}(\theta)$ is obtained for all
$\alpha>0$, and in Theorem~\ref{T:genas} the asymptotics of the series $\sum
a_{z}(1-\cos\langle z,\theta\rangle)$ with the coefficients $a_{z}$
asymptotically equivalent to $\|z\|^{-(d+\alpha)}$ is established. The
section is finalized by computation of the asymptotics of the function
$F_{d}(\theta)$ in the cases $d=2,3$. The subsequent
Sections~\ref{S:proofT1}--\ref{S-proofG} are devoted to the proofs of
Theorems~\ref{T:1}--\ref{T:genas}.

\section{Main Results}\label{S:mainresults}

For each $m=1,2\ldots,d$ let $\setJ_{m}$ denote the set of all sequences
$\{j_{1},\ldots,j_{d}\}$ with pairwise distinct elements
$j_{k}\in\{1,2,\ldots,d\}$ such that $j_{1}\le\ldots\le j_{m}$ and
$j_{m+1}\le\ldots\le j_{d}$. In the case $m=d$ the  sequence
$\{j_{m+1},\ldots, j_{d}\}$ is assumed to be empty. Let also $\setU_{m}$
denote the set of all sequences $\{s_{1},\ldots,s_{m}\}$ with the elements
$s_{i}=\pm1$, and $\setO_{d}$ stand for the set of all natural odd numbers
not exceeding $d$.

\begin{theorem}\label{T:1} Let $d\ge2$ and $\alpha>0$. Then
\begin{align}\nonumber
F_{d}(\theta)&=
2\sum_{m\in\setO_{d}}\sum_{\{j_{k}\}\in\setJ_{m}}\zeta(m+\alpha)\Bigl(2^{d-m}-
\Bigl(\theta_{j_{m+1}}\cot\frac{\theta_{j_{m+1}}}{2}\Bigr)
\cdots\Bigl(\theta_{j_{d}}\cot\frac{\theta_{j_{d}}}{2}\Bigr)\Bigr)
\\
\nonumber&+\frac{1}{2^{d-1}}\sum_{m\in\setO_{d}}\sum_{\{j_{k}\}\in\setJ_{m}}
\biggl(\cot\frac{\theta_{j_{m+1}}}{2}\cdots\cot\frac{\theta_{j_{d}}}{2}\\
\label{E:Fd-fin}&\begin{multlined}[b][0.8\linewidth]\quad\times\sum_{\{s_{k}\}\in\setU_{m}}\int\limits_{-\theta_{j_{d}}}^{\theta_{j_{d}}}\cdots
\int\limits_{\mathclap{-\theta_{j_{m+1}}}}^{\mathclap{\theta_{j_{m+1}}}}
H_{m+\alpha-1}(s_{1}\theta_{j_{1}}+\cdots+
s_{m}\theta_{j_{m}}\\+ \eta_{m+1}+\cdots+ \eta_{d})\,d \eta_{m+1}\ldots d \eta_{d}\biggr),
\end{multlined}
\end{align}
where $\zeta(s)=\sum_{n=1}^{\infty}\frac{1}{n^{s}}$ is the Riemann zeta
function.
\end{theorem}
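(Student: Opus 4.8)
The strategy is to implement the "reduction to dimension one" announced in the introduction: expand the multivariate sum in \eqref{E:defFd} by grouping lattice points $z\in\bZ^d\setminus\{0\}$ according to which coordinates are maximal in absolute value, and then rewrite each group in terms of one-dimensional sums of the type $H_{\beta}$. The starting point is to note that $\|z\|=\max_k|z_k|$, so for a fixed value $n=\|z\|$ the set of contributing $z$ consists of those vectors whose coordinates all lie in $\{-n,\dots,n\}$ with at least one coordinate equal to $\pm n$. I would organize this by choosing the subset $S\subseteq\{1,\dots,d\}$ of coordinates that attain the maximum $n$; the remaining coordinates range freely over $\{-(n-1),\dots,n-1\}$ (or, more symmetrically, one uses inclusion–exclusion: "all coordinates $\le n$" minus "all coordinates $\le n-1$"). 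The cardinality $|S|=m$ must be odd for the cosine terms not to cancel — this is where $\setO_d$ enters — because summing $\cos\langle z,\theta\rangle$ over the sign choices $z_k\mapsto\pm z_k$ for $k\in S$ produces a product of $\cos(n\theta_{j})$ over $j\in S$ that only survives (after further manipulation) with an odd number of factors; the sets $\setJ_m$ index the ordered ways of splitting $\{1,\dots,d\}$ into the maximal block (size $m$) and the free block (size $d-m$), and $\setU_m$ records the residual sign choices $s_k=\pm1$ on the maximal coordinates.

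The key computational lemma I would isolate is the one-dimensional geometric-type identity for the free coordinates: for each free coordinate $j$, summing $\tfrac1{?}\,(\cdots)$ over $z_j\in\{-(n-1),\dots,n-1\}$ with the weight coming from $\cos$ and the normalization $\|z\|^{-(d+\alpha)}=n^{-(d+\alpha)}$ produces a Dirichlet kernel, and the telescoping/summation over those $z_j$ converts a sum over the free coordinates into an integral over $\eta_j\in[-\theta_j,\theta_j]$ (after using $\sum_{|k|\le N}e^{ik\phi}=\frac{\sin((N+\tfrac12)\phi)}{\sin(\tfrac\phi2)}$ and recognizing that integrating $H_{\beta}$ shifts the sine/cosine structure). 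Concretely, I expect the identity $\sum_{k=-(n-1)}^{n-1} g(k\theta_j)$ — with $g$ built from $1-\cos$ — to be re-expressible via $\int_{-\theta_j}^{\theta_j}(\text{kernel at }n)\,d\eta_j$ up to the boundary terms that generate the $\cot(\theta_j/2)$ and $\theta_j\cot(\theta_j/2)$ prefactors in \eqref{E:Fd-fin}. Running this for all $d-m$ free coordinates in succession yields the iterated integral, and the leftover sum over $n$ of $n^{-(m+\alpha)}\bigl(1-\cos(n(s_1\theta_{j_1}+\cdots+s_m\theta_{j_m}+\eta_{m+1}+\cdots+\eta_d))\bigr)$ is exactly $H_{m+\alpha-1}$ evaluated at the shifted argument — here the shift of $\alpha$ by $m-1$ comes from the $d+\alpha=(m+\alpha-1)+(d-m+1)$ bookkeeping between the lattice-norm exponent and the number of coordinates involved. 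The "constant" pieces (where the $1-\cos$ term contributes its $1$ with no cancellation, or where a coordinate contributes trivially) assemble into the first line of \eqref{E:Fd-fin}, with $\zeta(m+\alpha)=\sum_n n^{-(m+\alpha)}$ and the factor $2^{d-m}-(\theta_{j_{m+1}}\cot\frac{\theta_{j_{m+1}}}{2})\cdots$ recording whether each free coordinate is summed "as is" ($2$) or collapsed via the cotangent.

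The main obstacle I anticipate is bookkeeping: correctly matching up the combinatorial indexing (the ordering conventions $j_1\le\dots\le j_m$, $j_{m+1}\le\dots\le j_d$ defining $\setJ_m$, and the $2^{d-m}$ versus $\cot$ dichotomy per free coordinate) with the analytic identity, so that no terms are double-counted and the parity restriction to $m\in\setO_d$ emerges cleanly rather than being imposed by hand. In particular, the interchange "sum over maximal block of size $m$" must be handled so that even-$m$ blocks genuinely cancel after the sign-sum over $\setU$-type reflections on \emph{all} coordinates (not just the free ones), and one must verify absolute convergence throughout to justify rearranging the doubly-infinite lattice sum into this nested finite-sum/integral form — the uniform convergence of \eqref{E:defFd} for $\alpha>0$ noted after its definition is what licenses this. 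Once the indexing is pinned down, the remaining steps — the Dirichlet-kernel summation, the passage from finite sums to integrals, and the collection of boundary terms into $\cot(\theta_j/2)$ factors — are elementary trigonometric manipulations, exactly as the abstract promises.
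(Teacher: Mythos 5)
Your plan follows essentially the same route as the paper: telescoping the lattice sum over the shells $\|z\|=n$ via inclusion--exclusion, applying the Dirichlet kernel in each coordinate, expanding the resulting difference of products of shifted sines (which is where the restriction to odd $m$ actually arises --- from cancellation in $\prod_j\sin\bigl((n+\tfrac12)\theta_j\bigr)-\prod_j\sin\bigl((n-\tfrac12)\theta_j\bigr)$, rather than from sign reflections of the ``maximal'' coordinates), converting the sine factors into iterated integrals over $[-\theta_j,\theta_j]$ with the $\cot(\theta_j/2)$ prefactors, and resumming over $n$ so that the constant pieces give $\zeta(m+\alpha)$ and the $1-\cos$ pieces give $H_{m+\alpha-1}$. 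This matches the paper's proof step for step, so what remains is exactly the combinatorial bookkeeping you already flagged.
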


By using asymptotic equalities \eqref{E:asympF1} for the function
$H_{\alpha}(\cdot)$, from the previous theorem it is easy to derive an
explicit form for the asymptotics of $F_{d}(\theta)$, for different values of
the parameter $\alpha$. Define
\begin{equation}\label{E:defAd}
A_{d}(\theta):=\sum_{\{j_{k}\}\in\setJ_{1}}\frac{1}{\theta_{j_{2}}\cdots\theta_{j_{d}}}
\int\limits_{-\theta_{j_{d}}}^{\theta_{j_{d}}}\cdots
\int\limits_{\mathclap{-\theta_{j_{2}}}}^{\mathclap{\theta_{j_{2}}}}
|\theta_{j_{1}}+ \eta_{2}+\cdots+ \eta_{d}|^{\alpha}\,d \eta_{2}\ldots d \eta_{d}.
\end{equation}
The set $\setJ_{1}$, over which summation in \eqref{E:defAd} is made,
consists of $d$ sequences $\{j_{k}\}$. These sequences are formed as follows:
the element $j_{1}$ takes all the values $1,2,\ldots, d$ and, for a chosen
$j_{1}$, the sequence of values $\{j_{2},j_{3},\ldots,j_{d}\}$ is obtained
from $\{1,2,\ldots,d\}$ by removing the member $j_{1}$.

\begin{theorem}\label{T:2}
Let $d\ge2$ and $\alpha>0$. Then
\begin{equation}\label{E:asympA}
F_{d}(\theta)\simeq\begin{dcases}
\vphantom{\Bigg|}\frac{2}{\alpha}\Gamma(1-\alpha)\cos\Bigl(\frac{\pi\alpha}{2}\Bigr)
A_{d}(\theta)&\text{for~} \alpha<2,\\
\vphantom{\Bigg|}\frac{2^{d-1}(d+2)}{3} |\theta|^{2}\log\frac{1}{|\theta|}
&\text{for~} \alpha=2,\\
\vphantom{\Bigg|}\frac{1}{6}\biggl(\sum_{n=1}^{\infty}
\frac{(2n+1)^{d}(n+1)-(2n-1)^{d}(n-1)}{n^{d+\alpha-1}}\biggr) |\theta|^{2}
&\text{for~} \alpha>2,
\end{dcases}
\end{equation}
where $|\theta|:=\sqrt{\theta_{1}^{2}+\cdots+\theta_{d}^{2}}$ is the
Euclidean norm of the vector $\theta=\{\theta_{1},\ldots,\theta_{d}\}$.
\end{theorem}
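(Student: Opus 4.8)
The plan is to derive Theorem~\ref{T:2} directly from the integral representation of Theorem~\ref{T:1} by substituting the known one-dimensional asymptotics \eqref{E:asympF1} for $H_{\beta}(\cdot)$ and then carefully identifying, in each of the three regimes $\alpha<2$, $\alpha=2$, $\alpha>2$, the dominant term as $\theta\to0$. The overall structure of \eqref{E:Fd-fin} is a sum over odd $m\in\setO_{d}$; the term with the smallest index, $m=1$, involves $H_{\alpha}$, the next one $m=3$ involves $H_{2+\alpha}$, and so on. Since $H_{\beta}(\theta)=O(\theta^{2})$ for $\beta>2$ and $H_{\beta}(\theta)\asymp|\theta|^{\beta}$ for $\beta<2$, the ``slowest vanishing'' contribution as $\theta\to0$ comes from whichever piece has the lowest effective power, and this will always be governed by the $m=1$ summand (for $\alpha<2$) or by a second-order expansion (for $\alpha\ge2$).

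First I would treat the case $\alpha<2$. In \eqref{E:Fd-fin} the first big sum, $2\sum\zeta(m+\alpha)(2^{d-m}-\prod(\theta_{j}\cot(\theta_{j}/2)))$, is $O(|\theta|^{2})$ because each factor $\theta\cot(\theta/2)=2-\theta^{2}/6+\dots$ so the bracket is $O(|\theta|^{2})$; hence this part is negligible compared to $|\theta|^{\alpha}$ when $\alpha<2$. In the second sum, the $m=1$ term contributes $\frac{1}{2^{d-1}}\sum_{\{j_k\}\in\setJ_1}\cot\frac{\theta_{j_2}}{2}\cdots\cot\frac{\theta_{j_d}}{2}\sum_{s_1=\pm1}\int\cdots\int H_{\alpha}(s_1\theta_{j_1}+\eta_2+\cdots+\eta_d)\,d\eta$. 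Using $H_{\alpha}(\theta)\simeq\frac{1}{\alpha}\Gamma(1-\alpha)\cos(\frac{\pi\alpha}{2})|\theta|^{\alpha}$, the sum over $s_1=\pm1$ of $|s_1\theta_{j_1}+\cdots|^{\alpha}$ combined with the symmetric integration over $\eta_k\in[-\theta_{j_k},\theta_{j_k}]$ folds into $2|\theta_{j_1}+\eta_2+\cdots+\eta_d|^{\alpha}$ integrated symmetrically; together with $\cot\frac{\theta}{2}\simeq\frac{2}{\theta}$ this produces exactly $\frac{2}{\alpha}\Gamma(1-\alpha)\cos(\frac{\pi\alpha}{2})\cdot\frac{1}{\theta_{j_2}\cdots\theta_{j_d}}\int\cdots\int|\theta_{j_1}+\eta_2+\cdots+\eta_d|^{\alpha}\,d\eta$, i.e.\ the term defining $A_d(\theta)$ in \eqref{E:defAd}. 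The higher $m\ge3$ terms involve $H_{m+\alpha-1}$ with $m+\alpha-1\ge\alpha+2>\alpha$, hence after the (bounded number of) integrations they are of strictly higher order in $|\theta|$; one must check that $A_d(\theta)$ itself is $\asymp|\theta|^{\alpha+(d-1)}\cdot$(homogeneous factor)\,/\,$(\theta_{j_2}\cdots\theta_{j_d})$ — in fact $A_d$ is positively homogeneous of degree $\alpha$ in $\theta$ and strictly positive for $\theta\ne0$, so that it is genuinely the leading term and the error terms are $o(A_d(\theta))$ uniformly.

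For $\alpha=2$ and $\alpha>2$ the roles invert: now $H_{\alpha}(\theta)=O(\theta^{2}\log(1/|\theta|))$ or $O(\theta^2)$, and \emph{every} summand in \eqref{E:Fd-fin} is $O(|\theta|^{2})$ (up to a log at $\alpha=2$), so I must extract the precise quadratic coefficient by Taylor-expanding all ingredients to second order. Concretely: $\theta\cot\frac{\theta}{2}=2-\frac{\theta^{2}}{6}+O(\theta^{4})$, $\cot\frac{\theta}{2}=\frac{2}{\theta}+O(\theta)$, and for $\beta\ge2$, $H_{\beta}(\eta)=\frac12 c_\beta \eta^2\bigl(1+o(1)\bigr)$ with $c_2=\log(1/|\eta|)$-type and $c_\beta=\zeta(\beta-1)$ for $\beta>2$; then the iterated integrals of $\eta^{2}$-type integrands over the boxes $\prod[-\theta_{j_k},\theta_{j_k}]$ are elementary (moments of the box), and after dividing by $\theta_{j_2}\cdots\theta_{j_d}$ one collects a clean quadratic form in $\theta$. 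The bookkeeping that yields the stated constants — $\frac{2^{d-1}(d+2)}{3}$ at $\alpha=2$, and the series $\frac16\sum_n\frac{(2n+1)^d(n+1)-(2n-1)^d(n-1)}{n^{d+\alpha-1}}$ at $\alpha>2$ — is the main obstacle: one has to sum over $m\in\setO_d$ and over $\{j_k\}\in\setJ_m$, count that $|\setJ_m|=\binom{d}{m}$, combine the ``$2^{d-m}$'' constant-term contributions from the first sum of \eqref{E:Fd-fin} with the integrated contributions from the second sum, and recognize the resulting $n$-sum as the given telescoped-looking series (this is where the binomial expansion of $(2n\pm1)^{d}$ and the identity $\sum_{m\in\setO_d}\binom{d}{m}n^{m}=\frac{(n+1)^d-(n-1)^d}{2}$, together with its ``weighted'' variant, must be invoked to match coefficients). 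I expect the $\alpha<2$ case to be essentially a clean rewrite, the $\alpha=2$ case to require care with the logarithmic factor (one checks $\log\frac{1}{|\eta|}\simeq\log\frac{1}{|\theta|}$ on the shrinking box since $|\eta|\le|\theta|\to0$ and the ratio of logs tends to $1$), and the $\alpha>2$ case to be the most computation-heavy but entirely algebraic.
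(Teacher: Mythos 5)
Your plan coincides with the paper's proof for $\alpha<2$ and $\alpha=2$: the paper likewise shows (its Lemma on the asymptotics of $F_d$) that the first sum in \eqref{E:Fd-fin} and all $m\ge3$ terms of the second sum are $O(\|\theta\|^{2})$, that the $m=1$ terms dominate, and then substitutes \eqref{E:asympF1}; your use of homogeneity and positivity of $A_d(\theta)$ on the unit sphere to guarantee that the $O(\|\theta\|^{2})$ remainders are negligible replaces the paper's explicit two-sided bounds $R_{1,\dots}\sim H^{*}_{\alpha}(\|\theta\|)$, and both work. Where you genuinely diverge is the case $\alpha>2$: you propose to stay inside the representation \eqref{E:Fd-fin}, expand every ingredient ($\zeta(m+\alpha)$, the cotangent products, and $H_{m+\alpha-1}(x)\simeq\tfrac12\zeta(m+\alpha-2)x^{2}$) to second order, and then re-sum over $m\in\setO_d$ with binomial identities to reconstitute the stated $n$-series. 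This can be made to work (Theorem~\ref{T:1} is an exact identity, and evenness in each coordinate plus permutation symmetry forces an isotropic quadratic form), but it amounts to undoing the derivation of Theorem~\ref{T:1}. The paper avoids this bookkeeping entirely by reverting to the pre-Theorem representation \eqref{E:Fdview} of Lemma~\ref{L:Fdview} and Taylor-expanding $\sin\bigl((n\pm\tfrac12)\theta_j\bigr)/\sin\tfrac{\theta_j}{2}=(2n\pm1)\bigl(1-\tfrac{n^{2}\pm n}{6}\theta_j^{2}\bigr)+\dotsb$, which yields the coefficient $\tfrac16\sum_n n^{-(d+\alpha)}\bigl((2n+1)^d(n^2+n)-(2n-1)^d(n^2-n)\bigr)$ in one line; you would be well advised to do the same, or be prepared for substantially heavier algebra whose endpoint must match this series.

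One step of your sketch is stated incorrectly and needs repair, though the repair is routine: at $\alpha=2$ you claim that $\log\frac{1}{|\eta|}\simeq\log\frac{1}{|\theta|}$ uniformly on the box because $|\eta|\le|\theta|\to0$. The argument of $H_{2}$ is $\theta_{j_1}+\eta_2+\cdots+\eta_d$, which vanishes on a hypersurface inside the integration box, and there the ratio of logarithms is unbounded, not close to $1$. What is true is the weighted statement: writing $L(x)=x^{2}\log\frac1{|x|}$ and using $L(x)=|\theta|^{2}L\bigl(\tfrac{x}{|\theta|}\bigr)+x^{2}\log\frac1{|\theta|}$ (the paper's device), the first term is bounded by $L^{*}|\theta|^{2}$ on the box since $|x|\le d|\theta|$, so after integration it contributes only $O(|\theta|^{2})=o\bigl(|\theta|^{2}\log\frac1{|\theta|}\bigr)$, and the second term produces the box moments that give the constant $\tfrac{2^{d-1}(d+2)}{3}$. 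With that correction, and with the $\alpha>2$ computation actually carried out (or replaced by the paper's shortcut via \eqref{E:Fdview}), your proposal yields the theorem.
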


The function $A_{d}(\theta)$ is positive for $\theta\neq0$ and homogeneous of
order $\alpha$, that is, $A_{d}(t\theta)\equiv t^{\alpha}A_{d}(\theta)$ for
$t\ge 0$. From here the next corollary follows.

\begin{corollary}Let $d\ge2$ and $0<\alpha<2$. Then
\[
F_{d}(\theta)\simeq
\frac{2}{\alpha}\Gamma(1-\alpha)\cos\Bigl(\frac{\pi\alpha}{2}\Bigr)
|\theta|^{\alpha}A_{d}\Bigl(\frac{\theta}{|\theta|}\Bigr),
\]
where $0<c\le A_{d}(\theta)\le C<\infty$ for all $\theta$ satisfying
$|\theta|=1$.\end{corollary}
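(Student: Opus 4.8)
The plan is to obtain the corollary from the $\alpha<2$ case of Theorem~\ref{T:2} together with the homogeneity of $A_d$ recorded above, the only additional ingredient being the continuity of $A_d$.

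First I would note the elementary rewriting of~\eqref{E:defAd} that makes both the homogeneity and the regularity of $A_d$ transparent. Substituting $\eta_k=\theta_{j_k}u_k$ in each iterated integral (this is legitimate regardless of the sign of $\theta_{j_k}$: the same factor $\theta_{j_k}$ appears in the integration limits and in the Jacobian and cancels the prefactor $(\theta_{j_2}\cdots\theta_{j_d})^{-1}$) turns~\eqref{E:defAd} into
\[
A_d(\theta)=\sum_{\{j_k\}\in\setJ_1}\ \int\limits_{[-1,1]^{d-1}}\bigl|\theta_{j_1}+\theta_{j_2}u_2+\cdots+\theta_{j_d}u_d\bigr|^{\alpha}\,du_2\cdots du_d ,
\]
an identity valid wherever the coordinates are nonzero and extending continuously to all of $\bR^d$ (this extension being the natural meaning of~\eqref{E:defAd} on the coordinate hyperplanes, where it is otherwise indeterminate of the form $0/0$). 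In this form the integrand is jointly continuous in $(\theta,u)$ and, on every bounded set of $\theta$, uniformly bounded, so dominated convergence gives $A_d\in C(\bR^d)$; likewise $A_d(t\theta)\equiv t^{\alpha}A_d(\theta)$ for $t\ge0$ by the positive homogeneity of $x\mapsto|x|^{\alpha}$.

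Granting this, the two claims of the corollary follow quickly. For the asymptotic formula, Theorem~\ref{T:2} gives $F_d(\theta)\simeq \frac{2}{\alpha}\Gamma(1-\alpha)\cos\bigl(\frac{\pi\alpha}{2}\bigr)A_d(\theta)$ as $\theta\to0$, and writing $\theta=|\theta|\cdot\theta/|\theta|$ and applying homogeneity with $t=|\theta|$ yields $A_d(\theta)=|\theta|^{\alpha}A_d(\theta/|\theta|)$, which inserted into the previous relation is exactly the displayed asymptotics. For the two-sided bound, $A_d$ is continuous on $\bR^d$ and, as already noted before the corollary, strictly positive on $\bR^d\setminus\{0\}$; since the Euclidean unit sphere $\{\,|\theta|=1\,\}$ is compact, $A_d$ attains there a minimum $c$ and a maximum $C$, with $0<c\le C<\infty$. (The upper bound $C<\infty$ is in fact immediate without compactness, since $|\theta_{j_1}+\theta_{j_2}u_2+\cdots+\theta_{j_d}u_d|\le\sqrt{d}\,|\theta|$ on the sphere; it is only the lower bound that really uses continuity plus compactness.)

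The argument is short, and the one step deserving care is the continuity of $A_d$ across the hyperplanes $\theta_{j_k}=0$: away from them $A_d$ is visibly built from smooth operations, but near them one must pass to the normalized integral above so that shrinking a side of the integration box to zero is manifestly a continuous operation. Everything else is substitution, homogeneity, and compactness.
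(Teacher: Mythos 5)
Your proof is correct and follows essentially the same route the paper intends: the corollary is deduced from the $\alpha<2$ case of Theorem~\ref{T:2} together with the positivity and $\alpha$-homogeneity of $A_{d}$ noted just before the statement, plus compactness of the unit sphere. Your normalized rewriting $A_d(\theta)=\sum_{\{j_k\}\in\setJ_1}\int_{[-1,1]^{d-1}}|\theta_{j_1}+\theta_{j_2}u_2+\cdots+\theta_{j_d}u_d|^{\alpha}\,du_2\cdots du_d$ merely makes explicit the continuity (including across the coordinate hyperplanes) that the paper leaves implicit, which is a welcome but not divergent addition.
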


The integral representation \eqref{E:defAd} for the function $A_{d}(\theta)$
in \eqref{E:asympA} may be found inconvenient for one reason or another. It
is possible to get rid of integrals in \eqref{E:defAd}, and thereby in
\eqref{E:asympA}, by taking advantage of the fact that the multiple integral
in \eqref{E:defAd} can be computed explicitly.
\begin{theorem}\label{T:3}
Let $d\ge2$ and $0<\alpha<2$. Then the function $A_{d}(\theta)$ admits the
following alternative representation:
\begin{multline}\label{E:asympSA}
A_{d}(\theta)=\frac{1}{(\alpha+1)\cdots(\alpha+d-1)}\times\\\times\sum_{\{j_{k}\}\in\setJ_{1}}\frac{1}{\theta_{j_{2}}\cdots\theta_{j_{d}}}
\sum_{s_{2},\ldots,s_{d}=\pm1}s_{2}\cdots s_{d}
(\theta_{j_{1}}+ s_{2}\theta_{j_{2}}+\cdots+ s_{d}\theta_{j_{d}})^{(d-1,\alpha)},
\end{multline}
where $x^{(m,\alpha)}:=x^{m}|x|^{\alpha}$.
\end{theorem}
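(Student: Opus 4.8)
The plan is to prove Theorem~\ref{T:3} by evaluating the multiple integral
\[
I(\theta_{j_{1}};\theta_{j_{2}},\ldots,\theta_{j_{d}}):=\int\limits_{-\theta_{j_{d}}}^{\theta_{j_{d}}}\cdots\int\limits_{-\theta_{j_{2}}}^{\theta_{j_{2}}}|\theta_{j_{1}}+\eta_{2}+\cdots+\eta_{d}|^{\alpha}\,d\eta_{2}\cdots d\eta_{d}
\]
in closed form, since substituting the result into \eqref{E:defAd} immediately gives \eqref{E:asympSA}. First I would fix the indexing and treat $a:=\theta_{j_{1}}$ as a parameter and $b_{2}:=\theta_{j_{2}},\ldots,b_{d}:=\theta_{j_{d}}$ as the integration half-widths. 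The key observation is that the primitive of $x\mapsto|x|^{\alpha}$ is $\frac{1}{\alpha+1}x|x|^{\alpha}=\frac{1}{\alpha+1}x^{(1,\alpha)}$ in the notation $x^{(m,\alpha)}=x^{m}|x|^{\alpha}$, and more generally that $\frac{d}{dx}x^{(m,\alpha)}=(m+\alpha)x^{(m-1,\alpha)}$ for $m\ge1$ (valid across $x=0$ since $m+\alpha\ge1>0$ keeps the function $C^{1}$). Hence iterated integration in $\eta_{2},\ldots,\eta_{d}$ raises the exponent $m$ by one at each step and divides by $(\alpha+1),(\alpha+2),\ldots,(\alpha+d-1)$, producing the prefactor $\frac{1}{(\alpha+1)\cdots(\alpha+d-1)}$.

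Concretely, I would argue by induction on the number of integration variables. For a single integration,
\[
\int_{-b}^{b}|a+\eta|^{\alpha}\,d\eta=\frac{1}{\alpha+1}\bigl[(a+\eta)^{(1,\alpha)}\bigr]_{\eta=-b}^{\eta=b}=\frac{1}{\alpha+1}\sum_{s=\pm1}s\,(a+sb)^{(1,\alpha)},
\]
which is the base case. For the inductive step, assume after integrating $\eta_{2},\ldots,\eta_{k}$ we have obtained
\[
\frac{1}{(\alpha+1)\cdots(\alpha+k-1)}\sum_{s_{2},\ldots,s_{k}=\pm1}s_{2}\cdots s_{k}\,(a+s_{2}b_{2}+\cdots+s_{k}b_{k}+\eta_{k+1}+\cdots)^{(k-1,\alpha)};
\]
then integrating the remaining innermost variable $\eta_{k+1}$ over $[-b_{k+1},b_{k+1}]$ uses $\int_{-b_{k+1}}^{b_{k+1}}(c+\eta_{k+1})^{(k-1,\alpha)}\,d\eta_{k+1}=\frac{1}{\alpha+k}\sum_{s_{k+1}=\pm1}s_{k+1}(c+s_{k+1}b_{k+1})^{(k,\alpha)}$, which advances the induction. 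After all $d-1$ integrations we arrive at
\[
I=\frac{1}{(\alpha+1)\cdots(\alpha+d-1)}\sum_{s_{2},\ldots,s_{d}=\pm1}s_{2}\cdots s_{d}\,(\theta_{j_{1}}+s_{2}\theta_{j_{2}}+\cdots+s_{d}\theta_{j_{d}})^{(d-1,\alpha)},
\]
and dividing by $\theta_{j_{2}}\cdots\theta_{j_{d}}$ and summing over $\{j_{k}\}\in\setJ_{1}$ yields \eqref{E:asympSA}.

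The only subtle point, and the one I would treat carefully rather than as routine, is the differentiation/integration identity $\frac{d}{dx}x^{(m,\alpha)}=(m+\alpha)x^{(m-1,\alpha)}$ at the single point $x=0$, together with the fact that the bracketed evaluation of a primitive is legitimate even though $|x|^{\alpha}$ and its antiderivatives are only finitely smooth there. For $0<\alpha<2$ and $m\ge1$ one has $m+\alpha-1\ge\alpha>0$, so $x^{(m,\alpha)}=x\cdot|x|^{m+\alpha-1}$ is continuously differentiable on all of $\bR$ with the stated derivative; thus each antiderivative used in the induction is genuinely $C^{1}$ and the fundamental theorem of calculus applies without a boundary issue, legitimizing the telescoping $\bigl[(c+\eta)^{(m,\alpha)}\bigr]_{-b}^{b}=\sum_{s=\pm1}s(c+sb)^{(m,\alpha)}$. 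Once this is in place the computation is purely mechanical. I should also note that the restriction $0<\alpha<2$ is inherited from Theorem~\ref{T:2} (it is where the representation \eqref{E:defAd} for $A_{d}$ enters the asymptotics), but the integral identity itself would in fact hold for any $\alpha>0$; I would phrase the proof so that the algebraic identity is separated from the range constraint.
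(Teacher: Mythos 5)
Your proof is correct and is essentially the paper's own argument: the paper likewise establishes $\frac{d}{dx}\bigl(x^{m}|x|^{\alpha}\bigr)=(\alpha+m)x^{m-1}|x|^{\alpha}$, iterates it to evaluate the $(d-1)$-fold integral in \eqref{E:defAd} in closed form, and then substitutes back to get \eqref{E:asympSA}. The only nitpick is your rewriting $x^{(m,\alpha)}=x\,|x|^{m+\alpha-1}$, which holds only for odd $m$; for even $m$ one has $x^{(m,\alpha)}=|x|^{m+\alpha}$, which is still $C^{1}$ with the same derivative, so your justification of the fundamental theorem of calculus step goes through unchanged.
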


\begin{example} Let $0<\alpha<2$ then
\begin{align*}
F_{2}(\theta)&\simeq
\frac{2}{\alpha (\alpha+1)}\Gamma(1-\alpha)\cos\left(\frac{\pi\alpha}{2}\right)\\
&\begin{multlined}[b][0.8\linewidth]\times\Bigl(\frac{1}{\theta_{2}}\bigl((\theta_{1}+\theta_{2})|\theta_{1}+\theta_{2}|^{\alpha}
-(\theta_{1}-\theta_{2})|\theta_{1}-\theta_{2}|^{\alpha}\bigr)\\
+\frac{1}{\theta_{1}}\bigl((\theta_{2}+\theta_{1})|\theta_{2}+\theta_{1}|^{\alpha}
-(\theta_{2}-\theta_{1})|\theta_{2}-\theta_{1}|^{\alpha}\bigr)\Bigr).
\end{multlined}
\end{align*}
\end{example}

Now consider the function
\begin{equation}\label{E:defF}
F(\theta)=\sum_{z\in\bZ^{d}\setminus\{0\}}a_{z}(1-\cos\langle z,\theta\rangle),
\qquad \theta\in\bR^{d},
\end{equation}
defined by a series of more general form than \eqref{E:defFd}. If the
coefficients $a_{z}$ satisfy
\begin{equation}\label{E:boundcond}
0<a_{*}\le a_{z}\|z\|^{d+\alpha}\le a^{*}<\infty,\quad z\in\mathbb{Z}^{d}\setminus\{0\},
\end{equation}
then
\begin{equation}\label{E:equiv}
a_{*}F_{d}(\theta)\le F(\theta)\le a^{*}F_{d}(\theta),
\end{equation}
that is, in this case, one can only say that, for every $\alpha>0$, the order
of decrease of the function $F(\theta)$ at zero is the same as the order of
decrease of the function $F_{d}(\theta)$. Under additional assumptions, the
asymptotic behavior of the function $F(\theta) $ can be specified explicitly,
if to use Theorem~\ref{T:2} for $F_{d}(\theta)$.

\begin{theorem}\label{T:genas}Let $0<\alpha\le 2$ and
$a_{z}\|z\|^{d+\alpha}\to 1$  as $\|z\|\to\infty$. Then $F(\theta)\simeq
F_{d}(\theta)$ as $\theta\to 0$.
\end{theorem}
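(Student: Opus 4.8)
The plan is to split the series defining $F(\theta)$ and $F_d(\theta)$ into a finite ``head'' and an infinite ``tail.'' Fix $\varepsilon>0$. Since $a_z\|z\|^{d+\alpha}\to1$, choose $N$ so large that $|a_z\|z\|^{d+\alpha}-1|\le\varepsilon$ for all $\|z\|>N$. Write
\[
F(\theta)=\underbrace{\sum_{0<\|z\|\le N}a_z(1-\cos\langle z,\theta\rangle)}_{=:P_N(\theta)}
+\underbrace{\sum_{\|z\|>N}a_z(1-\cos\langle z,\theta\rangle)}_{=:R_N(\theta)},
\]
and similarly $F_d(\theta)=Q_N(\theta)+S_N(\theta)$ with the same split of the lattice. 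The head $P_N$ is a finite trigonometric sum, hence $P_N(\theta)=O(|\theta|^2)$ as $\theta\to0$ (because $1-\cos\langle z,\theta\rangle\le\tfrac12\langle z,\theta\rangle^2$ and there are finitely many terms), and the same bound holds for $Q_N$. The tail satisfies $(1-\varepsilon)S_N(\theta)\le R_N(\theta)\le(1+\varepsilon)S_N(\theta)$ by construction, since every term of $R_N$ is squeezed between $(1-\varepsilon)$ and $(1+\varepsilon)$ times the corresponding nonnegative term of $S_N$.

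Next I would relate everything back to $F_d(\theta)$ itself. Since $F_d(\theta)\to0$ and, by Theorem~\ref{T:2} (or merely by the lower bound $F_d(\theta)\gtrsim|\theta|^\alpha$ for $\alpha<2$ and $\gtrsim|\theta|^2\log(1/|\theta|)$ or $|\theta|^2$ for $\alpha\ge2$ that follows from it), we have $|\theta|^2=o(F_d(\theta))$ in every case $0<\alpha\le2$ — the worst case $\alpha=2$ gives $F_d(\theta)\simeq c|\theta|^2\log(1/|\theta|)$, which still dominates $|\theta|^2$. Hence both $P_N(\theta)$ and $Q_N(\theta)$ are $o(F_d(\theta))$. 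Writing $R_N=F_d-Q_N-(Q_N\ \text{correction})$... more cleanly: from $F_d=Q_N+S_N$ we get $S_N(\theta)=F_d(\theta)-Q_N(\theta)=F_d(\theta)(1+o(1))$. Therefore
\[
F(\theta)=P_N(\theta)+R_N(\theta),\qquad
(1-\varepsilon)S_N(\theta)\le R_N(\theta)\le(1+\varepsilon)S_N(\theta),
\]
and dividing by $F_d(\theta)$ and letting $\theta\to0$,
\[
1-\varepsilon\le\liminf_{\theta\to0}\frac{F(\theta)}{F_d(\theta)}
\le\limsup_{\theta\to0}\frac{F(\theta)}{F_d(\theta)}\le1+\varepsilon.
\]
Since $\varepsilon>0$ was arbitrary, $F(\theta)/F_d(\theta)\to1$, which is the assertion.

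The only genuinely essential input is that $|\theta|^{2}$ is negligible compared with $F_d(\theta)$ as $\theta\to0$ for \emph{all} $0<\alpha\le2$; this is exactly what Theorem~\ref{T:2} supplies, the borderline $\alpha=2$ being the case that forces us to exclude $\alpha>2$ (there $F_d(\theta)\simeq c|\theta|^2$ is of the same order as the head, and the argument breaks). I expect the main obstacle to be purely bookkeeping: making sure the tail comparison is applied to nonnegative series (so the inequalities go the right way) and that the finite-head estimate is uniform in the (fixed) $N$ before $\theta\to0$. One should also note that convergence of all four series is guaranteed for $\alpha>0$ by the uniform convergence remarked after \eqref{E:defFd} together with \eqref{E:boundcond}, which holds here since $a_z\|z\|^{d+\alpha}\to1$ implies the two-sided bound \eqref{E:boundcond} for suitable $a_*,a^*$.
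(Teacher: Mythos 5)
Your argument is correct and is essentially the paper's own proof: fix $\varepsilon$, split off the finite head $\|z\|\le N$ (which is $O(|\theta|^{2})$ and hence negligible against $F_{d}(\theta)$ for $0<\alpha\le 2$ by the asymptotics of Theorem~\ref{T:2}), compare the tail termwise with the corresponding nonnegative tail of $F_{d}$ within a factor $1\pm\varepsilon$, and let $\varepsilon\to 0$; the paper merely phrases this by estimating $(F_{d}(\theta)-F(\theta))/F_{d}(\theta)$ directly instead of sandwiching the tail. One small quibble: the hypothesis $a_{z}\|z\|^{d+\alpha}\to 1$ does not literally give the two-sided bound \eqref{E:boundcond} (finitely many $a_{z}$ could vanish or be negative), but this does not matter since convergence and your estimates only use the bound on the tail together with the finiteness of the head.
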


In the case when $\alpha>2$ and $a_{z}\|z\|^{d+\alpha}\to 1$ for
$\|z\|\to\infty $, as in the case of \eqref{E:boundcond}, we again can only
argue that $F(\theta)$ will have the same order of decrease at zero as the
function $F_{d}(\theta)$.

\section{Proof of Theorem~\ref{T:1}}\label{S:proofT1}

We will begin with transformation of the function $F_{d}(\theta)$ to a form
playing a key role in the subsequent constructions:
\[
F_{d}(\theta)=\sum_{z\in\bZ^{d}}\frac{1}{\|z\|^{d+\alpha}}\bigr(1-e^{i\langle z,\theta\rangle}\bigl)
=\sum_{n=1}^{\infty}\frac{1}{n^{d+\alpha}}\sum_{\|z\|=n}\bigr(1-e^{i\langle z,\theta\rangle}\bigl),
\]
and then
\begin{align*}
F_{d}(\theta)&=\sum_{n=1}^{\infty}\frac{1}{n^{d+\alpha}}\biggl(\sum_{\|z\|\le n}\bigr(1-e^{i\langle z,\theta\rangle}\bigl)-
\sum_{\|z\|\le n-1}\bigr(1-e^{i\langle z,\theta\rangle}\bigl)\biggr)\\
&=\sum_{n=1}^{\infty}\frac{1}{n^{d+\alpha}}\biggl((2n+1)^{d}-(2n-1)^{d}-
\sum_{\|z\|\le n}e^{i\langle z,\theta\rangle}+
\sum_{\|z\|\le n-1}e^{i\langle z,\theta\rangle}\biggr)\\
&=\sum_{n=1}^{\infty}\frac{1}{n^{d+\alpha}}\bigl((2n+1)^{d}-(2n-1)^{d}\bigr)\\
&\begin{multlined}[b][0.8\linewidth]\quad-\sum_{n=1}^{\infty}\frac{1}{n^{d+\alpha}}\biggl(\sum_{|k_{1}|,\ldots,|k_{d}|\le n}e^{ik_{1}\theta_{1}}\cdots e^{ik_{d}\theta_{d}}\\
-\sum_{|k_{1}|,\ldots,|k_{d}|\le n-1}e^{ik_{1}\theta_{1}}\cdots e^{ik_{d}\theta_{d}}\biggr).
\end{multlined}
\end{align*}

Let us note that
\[
\sum_{k=-n}^{n}e^{ik\theta}=
\frac{e^{i(n+\frac{1}{2})\theta}-e^{-i(n+\frac{1}{2})\theta}}{e^{i\frac{\theta}{2}}-e^{-i\frac{\theta}{2}}}=
\frac{\sin(n+\frac{1}{2})\theta}{\sin\frac{\theta}{2}}.
\]
for any real $\theta\neq0$. Therefore the function $F_{d}(\theta)$ can be
represented in the form
\begin{multline}\label{E:Fdview}
F_{d}(\theta)=\sum_{n=1}^{\infty}\frac{1}{n^{d+\alpha}}\biggl((2n+1)^{d}-(2n-1)^{d}\\
-\biggl(\prod_{j=1}^{d}\frac{\sin(n+\frac{1}{2})\theta_{j}}{\sin\frac{\theta_{j}}{2}}-
\prod_{j=1}^{d}\frac{\sin(n-\frac{1}{2})\theta_{j}}{\sin\frac{\theta_{j}}{2}}\biggr)\biggr).
\end{multline}
So, we have proved the following
\begin{lemma}\label{L:Fdview}
The function $F_{d}(\theta)$ can be represented in the form
\begin{equation}\label{E:fd}
F_{d}(\theta)=\sum_{n=1}^{\infty}\frac{1}{n^{d+\alpha}}f_{d,n}(\theta),
\end{equation}
where
\begin{equation}\label{E:fd-term}
f_{d,n}(\theta)=(2n+1)^{d}-(2n-1)^{d}-
\frac{1}{\prod_{j=1}^{d}\sin\frac{\theta_{j}}{2}}S_{d}\Bigl(n\theta,\frac{\theta}{2}\Bigr)
\end{equation}
and
\[
S_{d}(\varphi,\psi):=\prod_{j=1}^{d}\sin(\varphi_{j}+\psi_{j})-
\prod_{j=1}^{d}\sin(\varphi_{j}-\psi_{j}),\qquad \varphi,\psi\in\bR^{d}.
\]
\end{lemma}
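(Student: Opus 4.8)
The plan is to expand each summand $1-\cos\langle z,\theta\rangle$ through complex exponentials and then to reorganize the lattice sum in \eqref{E:defFd} by the value of the max-norm $\|z\|$, so that a $d$-dimensional sum over a cube factors into a product of one-dimensional Dirichlet kernels.

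First I would symmetrize. Since $1-\cos\langle z,\theta\rangle=\operatorname{Re}\bigl(1-e^{i\langle z,\theta\rangle}\bigr)$ and $z\mapsto-z$ is a bijection of $\bZ^{d}\setminus\{0\}$ carrying $e^{i\langle z,\theta\rangle}$ to its conjugate, the imaginary parts cancel in pairs and $F_{d}(\theta)=\sum_{z\in\bZ^{d}\setminus\{0\}}\|z\|^{-(d+\alpha)}\bigl(1-e^{i\langle z,\theta\rangle}\bigr)$, the $z=0$ contribution being zero in any case. By the absolute (uniform) convergence of the defining series, this sum may be regrouped over the shells $\|z\|=n$, giving $F_{d}(\theta)=\sum_{n\ge1}n^{-(d+\alpha)}\sum_{\|z\|=n}\bigl(1-e^{i\langle z,\theta\rangle}\bigr)$, which is the form \eqref{E:fd}.

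Next I would write each shell as a difference of solid cubes, $\sum_{\|z\|=n}=\sum_{\|z\|\le n}-\sum_{\|z\|\le n-1}$. Counting lattice points in the max-norm ball gives $\sum_{\|z\|\le n}1=(2n+1)^{d}$, while the cube $\{\,\|z\|\le n\,\}=\{-n,\dots,n\}^{d}$ factors coordinatewise, so $\sum_{\|z\|\le n}e^{i\langle z,\theta\rangle}=\prod_{j=1}^{d}\sum_{k=-n}^{n}e^{ik\theta_{j}}$. Applying the Dirichlet-kernel identity $\sum_{k=-n}^{n}e^{ik\theta}=\sin\bigl((n+\tfrac12)\theta\bigr)/\sin\tfrac{\theta}{2}$ (valid whenever $\theta\not\equiv0\pmod{2\pi}$, in particular for small nonzero $\theta_{j}$) to each factor and subtracting the analogous expression for $n-1$, then pulling out the common factor $\prod_{j}(\sin\tfrac{\theta_{j}}{2})^{-1}$ and rewriting $\sin\bigl((n\pm\tfrac12)\theta_{j}\bigr)=\sin\bigl(n\theta_{j}\pm\tfrac{\theta_{j}}{2}\bigr)$, produces exactly the claimed expression \eqref{E:fd-term} with $\varphi=n\theta$ and $\psi=\tfrac{\theta}{2}$ in $S_{d}$.

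The only delicate point is the legitimacy of the regrouping into shells: one cannot split $\sum\bigl(1-e^{i\langle z,\theta\rangle}\bigr)$ into $\sum 1$ minus $\sum e^{i\langle z,\theta\rangle}$ term by term, since each of those diverges, so the cancellation has to be kept inside the finite shell sums; this is harmless precisely because the original series converges absolutely. One should also record that \eqref{E:fd-term} presupposes $\theta_{j}\ne0$ for every $j$, since otherwise the Dirichlet formula and the factor $(\sin\tfrac{\theta_{j}}{2})^{-1}$ degenerate — which is the relevant regime for the subsequent asymptotic analysis as $\theta\to0$.
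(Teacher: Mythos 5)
Your proposal is correct and follows essentially the same route as the paper: pass to complex exponentials (the sine parts cancelling by the symmetry $z\mapsto-z$), regroup the absolutely convergent series into shells $\|z\|=n$, write each shell as a difference of solid cubes, and evaluate the cube sums via the factorized Dirichlet kernel $\sum_{k=-n}^{n}e^{ik\theta_{j}}=\sin\bigl((n+\tfrac12)\theta_{j}\bigr)/\sin\tfrac{\theta_{j}}{2}$. Your explicit remarks on keeping the cancellation inside the finite shell sums and on the restriction $\theta_{j}\neq0$ are sound points that the paper leaves implicit.
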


Lemma~\ref{L:Fdview} indicates that properties of the function
$F_{d}(\theta)$ are determined by properties of the function
$S_{d}(\varphi,\psi)$. Therefore let us investigate the function
$S_{d}(\varphi,\psi)$ in more detail.

\begin{lemma}\label{L:sumcos}
The function $S_{d}(\varphi,\psi)$ can be represented in the form
\begin{multline*}
S_{d}(\varphi,\psi)=2\sum_{m\in\setO_{d}}\sum_{\{j_{k}\}\in\setJ_{m}} \sin\psi_{j_{1}}\cdots\sin\psi_{j_{m}}\cos\psi_{j_{m+1}}\cdots\cos\psi_{j_{d}}\\
\times
\cos\varphi_{j_{1}}\cdots\cos\varphi_{j_{m}}\sin\varphi_{j_{m+1}}\cdots\sin\varphi_{j_{d}}.
\end{multline*}
\end{lemma}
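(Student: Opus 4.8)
The plan is to expand both products of sines in the definition of $S_{d}(\varphi,\psi)$ using the addition formula $\sin(\varphi_{j}\pm\psi_{j})=\sin\varphi_{j}\cos\psi_{j}\pm\cos\varphi_{j}\sin\psi_{j}$, and then compare the two resulting sums term by term. Writing each factor as a sum of two monomials, the product $\prod_{j=1}^{d}\sin(\varphi_{j}+\psi_{j})$ becomes a sum over all $2^{d}$ subsets $I\subseteq\{1,\ldots,d\}$ of terms of the form
\[
\Bigl(\prod_{j\in I}\cos\varphi_{j}\sin\psi_{j}\Bigr)\Bigl(\prod_{j\notin I}\sin\varphi_{j}\cos\psi_{j}\Bigr),
\]
where $I$ records the factors in which the ``$\cos\varphi\,\sin\psi$'' branch was chosen. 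The analogous expansion of $\prod_{j=1}^{d}\sin(\varphi_{j}-\psi_{j})$ produces exactly the same monomials, each multiplied by $(-1)^{|I|}$, since every ``$\cos\varphi\,\sin\psi$'' branch now carries a minus sign.

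Subtracting, the terms with $|I|$ even cancel and the terms with $|I|$ odd survive with a factor $2$, so
\[
S_{d}(\varphi,\psi)=2\sum_{\substack{I\subseteq\{1,\ldots,d\}\\ |I|\ \mathrm{odd}}}\Bigl(\prod_{j\in I}\cos\varphi_{j}\sin\psi_{j}\Bigr)\Bigl(\prod_{j\notin I}\sin\varphi_{j}\cos\psi_{j}\Bigr).
\]
It remains to reindex this sum to match the statement. The next step is to group subsets $I$ by their cardinality $m=|I|$, which ranges over the odd numbers in $\{1,\ldots,d\}$, i.e.\ over $\setO_{d}$. For fixed $m$, a subset $I$ of size $m$ together with its complement is the same data as an element $\{j_{1},\ldots,j_{d}\}\in\setJ_{m}$: one lists the elements of $I$ in increasing order as $j_{1}\le\cdots\le j_{m}$ and the elements of the complement in increasing order as $j_{m+1}\le\cdots\le j_{d}$. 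Under this bijection $\prod_{j\in I}\cos\varphi_{j}\sin\psi_{j}=\cos\varphi_{j_{1}}\cdots\cos\varphi_{j_{m}}\sin\psi_{j_{1}}\cdots\sin\psi_{j_{m}}$ and $\prod_{j\notin I}\sin\varphi_{j}\cos\psi_{j}=\sin\varphi_{j_{m+1}}\cdots\sin\varphi_{j_{d}}\cos\psi_{j_{m+1}}\cdots\cos\psi_{j_{d}}$, which is precisely the summand in the claimed formula. Substituting the grouped sum gives the stated identity.

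There is no real obstacle here; the only point requiring a little care is the bookkeeping in the reindexing step --- verifying that summation over odd-size subsets $I$ with their ordered complements is exactly summation over $\setO_{d}$ and $\setJ_{m}$, and that the ordering conventions built into $\setJ_{m}$ (namely $j_{1}\le\cdots\le j_{m}$ and $j_{m+1}\le\cdots\le j_{d}$) make the correspondence a genuine bijection so that no term is counted twice or omitted. Once that is checked, the cancellation of the even-cardinality terms and the doubling of the odd-cardinality terms is immediate from the sign pattern $(-1)^{|I|}$, and the proof is complete.
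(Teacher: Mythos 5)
Your proof is correct and follows essentially the same route as the paper: expand both products via the sine addition formula, observe that the monomials indexed by subsets of even cardinality cancel in the difference while those of odd cardinality double, and reindex over $\setO_{d}$ and $\setJ_{m}$. Your write-up is in fact somewhat more explicit than the paper's (which only sketches the cancellation), but the underlying argument is identical.
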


\begin{proof}
By definition
\[
S_{d}(\varphi,\psi)=S_{d}^{+}(\varphi,\psi)-S_{d}^{-}(\varphi,\psi),
\]
where
\begin{align*}
S_{d}^{+}(\varphi,\psi)&=\prod_{j=1}^{d}\sin(\varphi_{j}+\psi_{j})=
\prod_{j=1}^{d}(\sin\varphi_{j}\cos\psi_{j}+\cos\varphi_{j}\sin\psi_{j}),\\
S_{d}^{-}(\varphi,\psi)&=\prod_{j=1}^{d}\sin(\varphi_{j}-\psi_{j})=
\prod_{j=1}^{d}(\sin\varphi_{j}\cos\psi_{j}-\cos\varphi_{j}\sin\psi_{j}).
\end{align*}
From here it is seen that the function $S_{d}^{+}(\varphi,\psi)$, as well as
the function $S_{d}^{+}(\varphi,\psi)$, is a sum of products of sines and
cosines in the variables $\varphi_{j}$ and $\psi_{j}$. The only difference
between them is that in the representation of $S_{d}^{+}(\varphi,\psi)$ all
the products of sines and cosines are prepended by the plus sign while in the
representation of $S_{d}^{-}(\varphi,\psi)$ half of such products are
prepended by the minus sign. Under summation half of such products of sines and cosines will be
mutually eliminated while others will be doubled. More specifically,
there will be doubled those products in the expansion of
$S_{d}^{-}(\varphi,\psi)$ that have odd number of multipliers of the form
$\cos\varphi_{j}\sin\psi_{j}$, which is reflected in the condition of the
lemma. The lemma is proved.
\end{proof}

Lemma~\ref{L:sumcos} shows that, in the representation of the function
$S_{d}(\varphi,\psi)$ as a sum of products of sines and cosines, each product
consists of odd number of cosine multipliers in the variables $\varphi_{j}$
and some number of sine multipliers in the variables $\varphi_{j}$. In
connection with this let us find out how do the products of cosines in
different variables or the products of sines  preceded by one cosine look
like.

\begin{lemma}\label{L-cosprod}
For each $m\ge1$ the following representation is valid:
\begin{equation}\label{E:cosprod}
\cos\varphi_{1}\cos\varphi_{2}\cdots\cos\varphi_{m}=
\frac{1}{2^{m}}\sum_{\{s_{k}\}\in\setU_{m}}
\cos(s_{1}\varphi_{1}+\cdots+s_{m}\varphi_{m}).
\end{equation}
\end{lemma}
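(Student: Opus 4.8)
The plan is to prove the identity \eqref{E:cosprod} by induction on $m$. The base case $m=1$ is trivial, since the right-hand side is $\frac{1}{2}(\cos\varphi_{1}+\cos(-\varphi_{1}))=\cos\varphi_{1}$. For the inductive step, I would assume the formula holds for $m-1$ and multiply both sides by $\cos\varphi_{m}$. On the left this gives the desired product $\cos\varphi_{1}\cdots\cos\varphi_{m}$. On the right, I apply the product-to-sum identity $\cos a\cos b=\frac{1}{2}\bigl(\cos(a+b)+\cos(a-b)\bigr)$ to each term $\cos(s_{1}\varphi_{1}+\cdots+s_{m-1}\varphi_{m-1})\cos\varphi_{m}$, which splits it into $\frac{1}{2}\cos(s_{1}\varphi_{1}+\cdots+s_{m-1}\varphi_{m-1}+\varphi_{m})+\frac{1}{2}\cos(s_{1}\varphi_{1}+\cdots+s_{m-1}\varphi_{m-1}-\varphi_{m})$, i.e.\ the two choices $s_{m}=\pm1$.

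The key observation that makes the bookkeeping clean is that $\setU_{m}$, the set of sign sequences of length $m$, is in bijection with $\setU_{m-1}\times\{+1,-1\}$ via $\{s_{1},\ldots,s_{m}\}\mapsto(\{s_{1},\ldots,s_{m-1}\},s_{m})$. Thus the $2^{m-1}$ terms in the sum for $m-1$, each producing two terms after multiplication by $\cos\varphi_{m}$, yield exactly the $2^{m}$ terms indexed by $\setU_{m}$, with the prefactor $\frac{1}{2^{m-1}}\cdot\frac{1}{2}=\frac{1}{2^{m}}$ coming out correctly. There is also the symmetry $\cos(-x)=\cos(x)$ to keep in mind, but it is not even needed for the induction; one could alternatively note that the sum over $\setU_{m}$ counts each distinct cosine argument and its negative, but the induction avoids this subtlety entirely.

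I do not anticipate a genuine obstacle here — the statement is an elementary trigonometric identity and the induction is essentially forced. The only point requiring a modicum of care is making sure the re-indexing of the sum over sign sequences is stated precisely, so that the doubling of terms matches the halving of the coefficient. An alternative, non-inductive proof would be to expand $\cos\varphi_{j}=\frac{1}{2}(e^{i\varphi_{j}}+e^{-i\varphi_{j}})$ for each $j$, multiply out the product to get $\frac{1}{2^{m}}\sum_{\{s_k\}}e^{i(s_{1}\varphi_{1}+\cdots+s_{m}\varphi_{m})}$, and then pair each term with the one obtained by flipping all signs to recover cosines; this is equally short and perhaps even more transparent, so I would likely present whichever is more concise in the final write-up.
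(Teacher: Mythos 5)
Your proof is correct and follows essentially the same route as the paper, which also establishes the identity by verifying the case $m=1$ and proceeding by induction; your write-up simply supplies the product-to-sum step and the re-indexing of the sign sequences that the paper leaves implicit. The alternative argument via $\cos\varphi_{j}=\tfrac{1}{2}(e^{i\varphi_{j}}+e^{-i\varphi_{j}})$ is also fine, but it is not needed here.
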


\begin{proof}
For $m=1$ equality \eqref{E:cosprod} is obvious. Further the proof is carried
out by induction.
\end{proof}

\begin{lemma}\label{L-sinprod}
For each $d\ge2$ the following representation is valid:
\[
\cos\varphi_{1}\sin\varphi_{2}\cdots\sin\varphi_{d}=
\frac{1}{2^{d-1}}\int\limits_{-\varphi_{d}}^{\varphi_{d}}\cdots
\int\limits_{\mathclap{-\varphi_{2}}}^{\mathclap{\varphi_{2}}}
\cos(\varphi_{1}+\sigma_{2}+\cdots+\sigma_{d})\,d\sigma_{2}\ldots d\sigma_{d}.
\]
\end{lemma}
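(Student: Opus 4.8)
The plan is to prove the identity
\[
\cos\varphi_{1}\sin\varphi_{2}\cdots\sin\varphi_{d}=
\frac{1}{2^{d-1}}\int\limits_{-\varphi_{d}}^{\varphi_{d}}\cdots
\int\limits_{-\varphi_{2}}^{\varphi_{2}}
\cos(\varphi_{1}+\sigma_{2}+\cdots+\sigma_{d})\,d\sigma_{2}\ldots d\sigma_{d}
\]
by induction on $d$, starting from $d=2$, where the key observation is simply the antiderivative formula
$\int_{-\varphi_{2}}^{\varphi_{2}}\cos(\varphi_{1}+\sigma_{2})\,d\sigma_{2}
=\sin(\varphi_{1}+\varphi_{2})-\sin(\varphi_{1}-\varphi_{2})=2\cos\varphi_{1}\sin\varphi_{2}$,
which gives the base case after dividing by $2^{d-1}=2$. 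For the inductive step, assume the claim for $d-1$ and compute the innermost integral over $\sigma_{d}$ in the $d$-variable expression: integrating $\cos(\varphi_{1}+\sigma_{2}+\cdots+\sigma_{d})$ in $\sigma_{d}$ over $[-\varphi_{d},\varphi_{d}]$ produces, by the same elementary antiderivative, $2\sin\varphi_{d}\cos(\varphi_{1}+\sigma_{2}+\cdots+\sigma_{d-1})$. Pulling the factor $2\sin\varphi_{d}$ out of the remaining $d-2$ integrals and combining with the $\frac{1}{2^{d-1}}$ prefactor leaves $\sin\varphi_{d}$ times $\frac{1}{2^{d-2}}$ times the $(d-1)$-fold integral of $\cos(\varphi_{1}+\sigma_{2}+\cdots+\sigma_{d-1})$ over $\sigma_{2},\ldots,\sigma_{d-1}$, which by the induction hypothesis equals $\cos\varphi_{1}\sin\varphi_{2}\cdots\sin\varphi_{d-1}$. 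Multiplying by the extracted $\sin\varphi_{d}$ recovers the left-hand side.

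Alternatively — and perhaps cleaner for exposition — one can avoid induction entirely by iterating the single antiderivative identity: since each integration variable appears in exactly one integral and the integrand depends on $\sigma_{j}$ only through the single cosine, one performs the $d-1$ integrations successively from the inside out, each one contributing a factor $2\sin\varphi_{j}$ and peeling off one argument from the cosine. After all $d-1$ integrations one is left with $2^{d-1}\sin\varphi_{2}\cdots\sin\varphi_{d}\cos\varphi_{1}$, and dividing by $2^{d-1}$ gives the result. This works because the order of integration is immaterial (the integrand is continuous and the domain is a box) and because $\int_{-a}^{a}\cos(c+\sigma)\,d\sigma = 2\sin a\cos c$ regardless of what $c$ stands for.

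The argument is genuinely routine, so there is no serious obstacle; the only point requiring a modicum of care is bookkeeping of the powers of $2$ and making sure the sign conventions in $\sin(\varphi_{1}+\varphi_{d})-\sin(\varphi_{1}-\varphi_{d})$ come out right when $\varphi_{d}$ may be negative — but the formula $\int_{-a}^{a}\cos(c+\sigma)\,d\sigma = 2\sin a\cos c$ holds for all real $a$, so no case distinction is needed. I would present the iterated-integration version, displaying one step explicitly and remarking that the remaining steps are identical.
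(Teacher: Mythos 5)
Your proof is correct and follows essentially the same route as the paper, which likewise verifies the case $d=2$ directly via $\int_{-\varphi_2}^{\varphi_2}\cos(\varphi_1+\sigma_2)\,d\sigma_2=2\cos\varphi_1\sin\varphi_2$ and then proceeds by induction on $d$; you simply spell out the inductive step (and its unrolled iterated-integration form) that the paper leaves to the reader. The bookkeeping of the factors of $2$ and the observation that $\int_{-a}^{a}\cos(c+\sigma)\,d\sigma=2\sin a\cos c$ holds for all real $a$ are exactly the details needed, so nothing is missing.
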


\begin{proof}
For $d=2$ the claim is verified directly. Further the proof is carried out by
induction.
\end{proof}

If the product
$\cos\varphi_{1}\cdots\cos\varphi_{m}\sin\varphi_{m+1}\cdots\sin\varphi_{d}$
contains not less than two cosines, that is $m\ge2$, then the following lemma
holds which generalizes Lemma~\ref{L-sinprod}.
\begin{lemma}\label{L-cossinprod}
For $1\le m< d$, $d\ge 2$, the following representation is valid:
\begin{multline*}
\cos\varphi_{1}\cdots\cos\varphi_{m}\sin\varphi_{m+1}\cdots\sin\varphi_{d}\\
=\frac{1}{2^{d}}\sum_{\{s_{k}\}\in\setU_{m}}\int\limits_{-\varphi_{d}}^{\varphi_{d}}\cdots
\int\limits_{\mathclap{-\varphi_{m+1}}}^{\mathclap{\varphi_{m+1}}}
\cos(s_{1}\varphi_{1}+\cdots+s_{m}\varphi_{m}+\sigma_{m+1}+\cdots+\sigma_{d})\,d\sigma_{m+1}\ldots d\sigma_{d}.
\end{multline*}
\end{lemma}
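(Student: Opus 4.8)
The plan is to combine the two preceding lemmas. Lemma~\ref{L-cosprod} rewrites the block of cosines $\cos\varphi_{1}\cdots\cos\varphi_{m}$ as the average $\frac{1}{2^{m}}\sum_{\{s_{k}\}\in\setU_{m}}\cos(s_{1}\varphi_{1}+\cdots+s_{m}\varphi_{m})$, and Lemma~\ref{L-sinprod} rewrites a single cosine times a block of sines as an iterated integral. The natural first step is therefore to reduce the product with $m$ cosines to a combination of products each having exactly one cosine, and then apply Lemma~\ref{L-sinprod} to each such term. Concretely, I would first treat the case $m=1$, where the statement is precisely Lemma~\ref{L-sinprod} (the sum over $\setU_{1}$ has two terms $s_{1}=\pm1$, and since cosine is even, $\cos(s_{1}\varphi_{1}+\cdots)=\cos(\varphi_{1}+\cdots)$, so the two terms are equal and the factor $\frac{1}{2^{d}}\cdot 2=\frac{1}{2^{d-1}}$ matches). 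This confirms the normalisation.

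For general $1\le m<d$, I would start from
\[
\cos\varphi_{1}\cdots\cos\varphi_{m}\sin\varphi_{m+1}\cdots\sin\varphi_{d}
=\Bigl(\frac{1}{2^{m}}\sum_{\{s_{k}\}\in\setU_{m}}\cos(s_{1}\varphi_{1}+\cdots+s_{m}\varphi_{m})\Bigr)\sin\varphi_{m+1}\cdots\sin\varphi_{d},
\]
using Lemma~\ref{L-cosprod}. Then, for each fixed sign pattern $\{s_{k}\}\in\setU_{m}$, I would apply Lemma~\ref{L-sinprod} with $\varphi_{1}$ there replaced by the single angle $\psi:=s_{1}\varphi_{1}+\cdots+s_{m}\varphi_{m}$ and with the remaining variables $\varphi_{2},\ldots,\varphi_{d}$ there replaced by $\varphi_{m+1},\ldots,\varphi_{d}$. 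This is legitimate because Lemma~\ref{L-sinprod} is an identity in its $d-1\ge 1$ angular arguments, hence valid after any substitution; here it is applied in ``dimension'' $d-m+1$, producing a $(d-m)$-fold iterated integral over $\sigma_{m+1},\ldots,\sigma_{d}$ with the integrand $\cos(\psi+\sigma_{m+1}+\cdots+\sigma_{d})$ and the prefactor $\frac{1}{2^{d-m}}$. Summing over the $2^{m}$ sign patterns and collecting the constants $\frac{1}{2^{m}}\cdot\frac{1}{2^{d-m}}=\frac{1}{2^{d}}$ yields exactly the claimed formula.

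The only point requiring care — and the one I would expect to be the main (minor) obstacle — is bookkeeping of the number of integrations and the power of $2$: one must check that Lemma~\ref{L-sinprod} is being invoked with the correct number of sine factors (namely $d-m$, against the one cosine $\cos\psi$), so that it contributes $d-m$ integrals and a factor $2^{-(d-m)}$, not $2^{-(d-1)}$. Once this is tracked correctly the identity falls out immediately, with no need for a separate induction; alternatively, one could phrase the whole argument as an induction on $m$ with base case $m=1$ given by Lemma~\ref{L-sinprod}, peeling off one cosine at a time via the product-to-sum formula $\cos\varphi_{1}\cos x=\tfrac12(\cos(\varphi_{1}+x)+\cos(\varphi_{1}-x))$, but the direct substitution route above is cleaner.
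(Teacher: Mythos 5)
Your proposal is correct and follows essentially the same route as the paper: expand the cosine block via Lemma~\ref{L-cosprod} and then apply Lemma~\ref{L-sinprod} (in dimension $d-m+1$) to each sign pattern, with the constants combining as $2^{-m}\cdot 2^{-(d-m)}=2^{-d}$. The only nit is in your $m=1$ consistency check: the integrands for $s_{1}=\pm1$ are not pointwise equal, but their integrals over the symmetric boxes coincide (substitute $\sigma_{j}\mapsto-\sigma_{j}$), so the conclusion stands.
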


\begin{proof}
Expanding by Lemma~\ref{L-cosprod} the product of cosines
$\cos\varphi_{1}\cdots\cos\varphi_{m}$ in a sum of cosines we obtain:
\begin{multline*}
\cos\varphi_{1}\cdots\cos\varphi_{m}\sin\varphi_{m+1}\cdots\sin\varphi_{d}\\
=\frac{1}{2^{m}}\sum_{\{s_{k}\}\in\setU_{m}}
\cos(s_{1}\varphi_{1}+\cdots+s_{m}\varphi_{m})\sin\varphi_{m+1}\cdots\sin\varphi_{d}.
\end{multline*}
Then, by applying to the summands in the right-hand part the integral
representation from Lemma~\ref{L-sinprod}, we deduce the claim of the lemma.
\end{proof}

In view of Lemma~\ref{L:Fdview} the arguments of the function
$S_{d}(\varphi,\psi)$ in the representation \eqref{E:fd}, \eqref{E:fd-term}
of $F_{d}(\theta)$ have a special form: $\varphi= n\theta$ and
$\psi=\frac{\theta}{2}$. Therefore to complete the proof of Theorem~\ref{T:1}
we need to provide a more detailed analysis of properties of the function
$S_{d}\Bigl(n\theta,\frac{\theta}{2}\Bigr)$.

By Lemma~\ref{L:sumcos}
\begin{multline*}
\frac{1}{\prod_{j=1}^{d}\sin\frac{\theta_{j}}{2}}S_{d}\Bigl(n\theta,\frac{\theta}{2}\Bigr)\\
=2\sum_{m\in\setO_{d}}\sum_{\{j_{k}\}\in\setJ_{m}}
\cot\frac{\theta_{j_{m+1}}}{2}\cdots\cot\frac{\theta_{j_{d}}}{2}
\cos n\theta_{j_{1}}\cdots\cos n\theta_{j_{m}}\sin n\theta_{j_{m+1}}\cdots\sin n\theta_{j_{d}}.
\end{multline*}
Replacing by Lemma~\ref{L-cossinprod} in this last equality the products of
sines and cosines in the variables $n\theta_{j}$ by the related sums of
integrals we get:
\begin{multline*}
\frac{1}{\prod_{j=1}^{d}\sin\frac{\theta_{j}}{2}}S_{d}\Bigl(n\theta,\frac{\theta}{2}\Bigr)\\
\quad\quad=2\frac{1}{2^{d}}\sum_{m\in\setO_{d}}\sum_{\{j_{k}\}\in\setJ_{m}} \biggl(\cot\frac{\theta_{j_{m+1}}}{2}\cdots\cot\frac{\theta_{j_{d}}}{2}\\
\quad\quad\quad\times\sum_{\{s_{k}\}\in\setU_{m}}\int\limits_{-n\theta_{j_{d}}}^{n\theta_{j_{d}}}\cdots
\int\limits_{\mathclap{-n\theta_{j_{m+1}}}}^{\mathclap{n\theta_{j_{m+1}}}}
\cos(s_{1}n\theta_{j_{1}}+\cdots+
s_{m}n\theta_{j_{m}}\\
+\sigma_{m+1}+\cdots+\sigma_{d})\,d\sigma_{m+1}\ldots d\sigma_{d}\biggr).
\end{multline*}
Carrying out the change of variables $\sigma_{j}=n\eta_{j}$ in these
integrals we achieve that cosines under the integrals will depend on $n$-fold
arguments:
\begin{multline*}
\frac{1}{\prod_{j=1}^{d}\sin\frac{\theta_{j}}{2}}S_{d}\Bigl(n\theta,\frac{\theta}{2}\Bigr)\\
\quad\quad=2\frac{1}{2^{d}}\sum_{m\in\setO_{d}}\sum_{\{j_{k}\}\in\setJ_{m}} n^{d-m}\biggl(\cot\frac{\theta_{j_{m+1}}}{2}\cdots\cot\frac{\theta_{j_{d}}}{2}\\
\quad\quad\quad\times\sum_{\{s_{k}\}\in\setU_{m}}\int\limits_{-\theta_{j_{d}}}^{\theta_{j_{d}}}\cdots
\int\limits_{\mathclap{-\theta_{j_{m+1}}}}^{\mathclap{\theta_{j_{m+1}}}}
\cos n(s_{1}\theta_{j_{1}}+\cdots+
s_{m}\theta_{j_{m}}\\
+ \eta_{m+1}+\cdots+ \eta_{d})\,d \eta_{m+1}\ldots d \eta_{d}\biggr).
\end{multline*}
Hence the term $f_{d,n}(\theta)$ in \eqref{E:fd}, \eqref{E:fd-term} can be
represented in the following form
\begin{align*}
f_{d,n}(\theta)&=(2n+1)^{d}-(2n-1)^{d}\\
&-2\frac{1}{2^{d}}\sum_{m\in\setO_{d}}\sum_{\{j_{k}\}\in\setJ_{m}} n^{d-m}\biggl(\cot\frac{\theta_{j_{m+1}}}{2}\cdots\cot\frac{\theta_{j_{d}}}{2}\\
&\begin{multlined}[b][0.8\linewidth]\quad\times\sum_{\{s_{k}\}\in\setU_{m}}\int\limits_{-\theta_{j_{d}}}^{\theta_{j_{d}}}\cdots
\int\limits_{\mathclap{-\theta_{j_{m+1}}}}^{\mathclap{\theta_{j_{m+1}}}}
\cos n(s_{1}\theta_{j_{1}}+\cdots+
s_{m}\theta_{j_{m}}\\
+ \eta_{m+1}+\cdots+ \eta_{d})\,d \eta_{m+1}\ldots d \eta_{d}\biggr).
\end{multlined}
\end{align*}
From here, by subtracting unity from cosine under the integral and then again
adding unity to it, we obtain
\begin{align*}
f_{d,n}(\theta)&=(2n+1)^{d}-(2n-1)^{d}\\
&\begin{multlined}[b][0.8\linewidth]-2\frac{1}{2^{d}}\sum_{m\in\setO_{d}}\sum_{\{j_{k}\}\in\setJ_{m}} n^{d-m}
\biggl(\cot\frac{\theta_{j_{m+1}}}{2}\cdots\cot\frac{\theta_{j_{d}}}{2}\\
\times
\sum_{\{s_{k}\}\in\setU_{m}}\int\limits_{-\theta_{j_{d}}}^{\theta_{j_{d}}}\cdots
\int\limits_{\mathclap{-\theta_{j_{m+1}}}}^{\mathclap{\theta_{j_{m+1}}}}
1\,d \eta_{m+1}\ldots d \eta_{d}\biggr)\end{multlined}
\\
&+2\frac{1}{2^{d}}\sum_{m\in\setO_{d}}\sum_{\{j_{k}\}\in\setJ_{m}} n^{d-m}\biggl(\cot\frac{\theta_{j_{m+1}}}{2}\cdots\cot\frac{\theta_{j_{d}}}{2}\\
&\begin{multlined}[b][0.8\linewidth]\quad\times\sum_{\{s_{k}\}\in\setU_{m}}\int\limits_{-\theta_{j_{d}}}^{\theta_{j_{d}}}\cdots
\int\limits_{\mathclap{-\theta_{j_{m+1}}}}^{\mathclap{\theta_{j_{m+1}}}}
\bigl(1-\cos n(s_{1}\theta_{j_{1}}+\cdots+
s_{m}\theta_{j_{m}}\\+ \eta_{m+1}+\cdots+ \eta_{d})\bigr)\,d \eta_{m+1}\ldots d \eta_{d}\biggr).
\end{multlined}
\end{align*}
Here
\begin{multline*}
\cot\frac{\theta_{j_{m+1}}}{2}\cdots\cot\frac{\theta_{j_{d}}}{2}
\sum_{\{s_{k}\}\in\setU_{m}}\int\limits_{-\theta_{j_{d}}}^{\theta_{j_{d}}}\cdots
\int\limits_{\mathclap{-\theta_{j_{m+1}}}}^{\mathclap{\theta_{j_{m+1}}}}
1\,d \eta_{m+1}\ldots d \eta_{d}\\
\quad\quad=\cot\frac{\theta_{j_{m+1}}}{2}\cdots\cot\frac{\theta_{j_{d}}}{2}\cdot 2^{m}\cdot
(2\theta_{m+1}\cdots 2\theta_{d})\\
=2^{d}\Bigl(\theta_{j_{m+1}}\cot\frac{\theta_{j_{m+1}}}{2}\Bigr)
\cdots\Bigl(\theta_{j_{d}}\cot\frac{\theta_{j_{d}}}{2}\Bigr).
\end{multline*}
Then
\begin{align*}
f_{d,n}(\theta)&=(2n+1)^{d}-(2n-1)^{d}\\
&-2\sum_{m\in\setO_{d}}\sum_{\{j_{k}\}\in\setJ_{m}} n^{d-m}\Bigl(\theta_{j_{m+1}}\cot\frac{\theta_{j_{m+1}}}{2}\Bigr)
\cdots\Bigl(\theta_{j_{d}}\cot\frac{\theta_{j_{d}}}{2}\Bigr)
\\
&+2\frac{1}{2^{d}}\sum_{m\in\setO_{d}}\sum_{\{j_{k}\}\in\setJ_{m}} n^{d-m}\biggl(\cot\frac{\theta_{j_{m+1}}}{2}\cdots\cot\frac{\theta_{j_{d}}}{2}\\
&\begin{multlined}[b][0.8\linewidth]\quad\times\sum_{\{s_{k}\}\in\setU_{m}}\int\limits_{-\theta_{j_{d}}}^{\theta_{j_{d}}}\cdots
\int\limits_{\mathclap{-\theta_{j_{m+1}}}}^{\mathclap{\theta_{j_{m+1}}}}
\bigl(1-\cos n(s_{1}\theta_{j_{1}}+\cdots+
s_{m}\theta_{j_{m}}\\
+ \eta_{m+1}+\cdots+ \eta_{d})\bigr)\,d \eta_{m+1}\ldots d \eta_{d}\biggr)
\end{multlined}
\end{align*}
or, what is the same,
\begin{align*}
f_{d,n}(\theta)&=(2n+1)^{d}-(2n-1)^{d}
-2\sum_{m\in\setO_{d}}\sum_{\{j_{k}\}\in\setJ_{m}} (2n)^{d-m}\\
&+2\sum_{m\in\setO_{d}}\sum_{\{j_{k}\}\in\setJ_{m}} n^{d-m}\Bigl(2^{d-m}-\Bigl(\theta_{j_{m+1}}\cot\frac{\theta_{j_{m+1}}}{2}\Bigr)
\cdots\Bigl(\theta_{j_{d}}\cot\frac{\theta_{j_{d}}}{2}\Bigr)\Bigr)
\\
&+2\frac{1}{2^{d}}\sum_{m\in\setO_{d}}\sum_{\{j_{k}\}\in\setJ_{m}} n^{d-m}\biggl(\cot\frac{\theta_{j_{m+1}}}{2}\cdots\cot\frac{\theta_{j_{d}}}{2}\\
&\begin{multlined}[b][0.8\linewidth]\quad\times\sum_{\{s_{k}\}\in\setU_{m}}\int\limits_{-\theta_{j_{d}}}^{\theta_{j_{d}}}\cdots
\int\limits_{\mathclap{-\theta_{j_{m+1}}}}^{\mathclap{\theta_{j_{m+1}}}}
\bigl(1-\cos n(s_{1}\theta_{j_{1}}+\cdots+
s_{m}\theta_{j_{m}}\\+ \eta_{m+1}+\cdots+ \eta_{d})\bigr)\,d \eta_{m+1}\ldots d \eta_{d}\biggr).
\end{multlined}
\end{align*}
Here the external sums are taken over all the sets of indices $\{j_{k}\}$
with odd number of members $j_{1},\ldots,j_{m}$. Then
\[
2  \sum_{m\in\setO_{d}}\sum_{\{j_{k}\}\in\setJ_{m}} (2n)^{d-m}=2\sum_{m=1,3,\ldots\le d}
\binom{d}{m}(2n)^{d-m}=(2n+1)^{d}-(2n-1)^{d},
\]
and therefore
\begin{multline}\label{E:fdn-fin}
f_{d,n}(\theta)=
2\sum_{m\in\setO_{d}}\sum_{\{j_{k}\}\in\setJ_{m}} n^{d-m}\Bigl(2^{d-m}-\Bigl(\theta_{j_{m+1}}\cot\frac{\theta_{j_{m+1}}}{2}\Bigr)
\cdots\Bigl(\theta_{j_{d}}\cot\frac{\theta_{j_{d}}}{2}\Bigr)\Bigr)
\\
\hphantom{f_{d,n}(\theta)}+2\frac{1}{2^{d}}\sum_{m\in\setO_{d}}\sum_{\{j_{k}\}\in\setJ_{m}} n^{d-m}\biggl(\cot\frac{\theta_{j_{m+1}}}{2}\cdots\cot\frac{\theta_{j_{d}}}{2}\\
\hphantom{f_{d,n}(\theta)}\quad\times\sum_{\{s_{k}\}\in\setU_{m}}\int\limits_{-\theta_{j_{d}}}^{\theta_{j_{d}}}\cdots
\int\limits_{\mathclap{-\theta_{j_{m+1}}}}^{\mathclap{\theta_{j_{m+1}}}}
\bigl(1-\cos n(s_{1}\theta_{j_{1}}+\cdots+
s_{m}\theta_{j_{m}}\\
+ \eta_{m+1}+\cdots+ \eta_{d})\bigr)\,d \eta_{m+1}\ldots d \eta_{d}\biggr).
\end{multline}

Substituting now the obtained expression in \eqref{E:fd} and changing there
the order of summation, using the fact that all the sums in \eqref{E:fdn-fin}
are finite, we obtain
\begin{align*}
F_{d}(\theta)&=\sum_{n=1}^{\infty}\frac{1}{n^{d+\alpha}}f_{d,n}(\theta)\\
&=2\sum_{m\in\setO_{d}}\sum_{\{j_{k}\}\in\setJ_{m}}\biggl(\sum_{n=1}^{\infty}\frac{n^{d-m}}{n^{d+\alpha}}\biggr)
\Bigl(2^{d-m}-\Bigl(\theta_{j_{m+1}}\cot\frac{\theta_{j_{m+1}}}{2}\Bigr)
\cdots\Bigl(\theta_{j_{d}}\cot\frac{\theta_{j_{d}}}{2}\Bigr)\Bigr)\\
&+2\frac{1}{2^{d}}\sum_{m\in\setO_{d}}\sum_{\{j_{k}\}\in\setJ_{m}} \biggl(\cot\frac{\theta_{j_{m+1}}}{2}\cdots\cot\frac{\theta_{j_{d}}}{2}\\
&\begin{multlined}[b][0.8\linewidth]\quad\times\sum_{\{s_{k}\}\in\setU_{m}}\int\limits_{-\theta_{j_{d}}}^{\theta_{j_{d}}}\cdots
\int\limits_{\mathclap{-\theta_{j_{m+1}}}}^{\mathclap{\theta_{j_{m+1}}}}
\Bigl(\sum_{n=1}^{\infty}\frac{n^{d-m}}{n^{d+\alpha}}\bigl(1-\cos n(s_{1}\theta_{j_{1}}+\cdots+
s_{m}\theta_{j_{m}}\\
+ \eta_{m+1}+\cdots+ \eta_{d})\bigl)\Bigr)\,d \eta_{m+1}\ldots d \eta_{d}\biggr),
\end{multlined}
\end{align*}
from which representation \eqref{E:Fd-fin} follows. Theorem~\ref{T:1} is
proved.

\section{Proof of Theorem~\ref{T:2}}\label{S:proofT2}

Let us note that the function $F_{d}(\theta)$ is even and it does not change
values under any permutation of the coordinates of the vector
$\theta=\{\theta_{1},\theta_{2},\ldots,\theta_{d}\}$, and also under any
change of sign of this vector. Therefore, in what follows in this section,
without loss in generality the coordinates of the vector $\theta$ can be
taken as nonnegative.

First show that the asymptotic behavior of the function $F_{d}(\theta)$ is
defined actually only by those summands in \eqref{E:Fd-fin} which relate to
the case $m=1$.

\begin{lemma}\label{L:asympFd}
For $d\ge2$ and $0<\alpha\le2$ the following equality is valid:
\begin{multline}\label{E:asymp}
F_{d}(\theta)=
2\sum_{\{j_{k}\}\in\setJ_{1}} \frac{1}{\theta_{j_{2}}\cdots\theta_{j_{d}}}
\int\limits_{-\theta_{j_{d}}}^{\theta_{j_{d}}}\cdots
\int\limits_{\mathclap{-\theta_{j_{2}}}}^{\mathclap{\theta_{j_{2}}}}
H_{\alpha}(\theta_{j_{1}}+\eta_{2}+\cdots+ \eta_{d})\,d \eta_{2}\ldots d \eta_{d}
\\+O(\|\theta\|^{2}).
\end{multline}
\end{lemma}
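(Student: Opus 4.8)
The plan is to start from the exact identity \eqref{E:Fd-fin} of Theorem~\ref{T:1} and show that every term with $m\ge 3$ contributes only $O(\|\theta\|^{2})$, so that only the $m=1$ block survives in the claimed asymptotic form. Recall that \eqref{E:Fd-fin} has two families of summands indexed by $m\in\setO_{d}$: a ``polynomial'' family
\[
2\sum_{\{j_{k}\}\in\setJ_{m}}\zeta(m+\alpha)\Bigl(2^{d-m}-\bigl(\theta_{j_{m+1}}\cot\tfrac{\theta_{j_{m+1}}}{2}\bigr)\cdots\bigl(\theta_{j_{d}}\cot\tfrac{\theta_{j_{d}}}{2}\bigr)\Bigr),
\]
and an ``integral'' family involving $H_{m+\alpha-1}$. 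For the polynomial family one uses the elementary expansion $t\cot\tfrac t2 = 2 - \tfrac{t^{2}}{6} + O(t^{4})$ as $t\to 0$; multiplying $d-m$ such factors gives $2^{d-m} - \bigl(\theta_{j_{m+1}}\cot\tfrac{\theta_{j_{m+1}}}{2}\bigr)\cdots\bigl(\theta_{j_{d}}\cot\tfrac{\theta_{j_{d}}}{2}\bigr) = 2^{d-m-1}\cdot\tfrac{1}{6}\bigl(\theta_{j_{m+1}}^{2}+\cdots+\theta_{j_{d}}^{2}\bigr) + O(\|\theta\|^{4})$, which is $O(\|\theta\|^{2})$ for every $m\in\setO_{d}$ (including $m=1$). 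So the \emph{entire} polynomial family is $O(\|\theta\|^{2})$ and can be absorbed into the error term of \eqref{E:asymp}.

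It then remains to analyze the integral family. For a fixed $m$ and fixed index set $\{j_{k}\}$, the typical term is
\[
\frac{1}{2^{d-1}}\,\cot\tfrac{\theta_{j_{m+1}}}{2}\cdots\cot\tfrac{\theta_{j_{d}}}{2}\sum_{\{s_{k}\}\in\setU_{m}}\int_{-\theta_{j_{d}}}^{\theta_{j_{d}}}\!\!\cdots\!\!\int_{-\theta_{j_{m+1}}}^{\theta_{j_{m+1}}} H_{m+\alpha-1}\bigl(s_{1}\theta_{j_{1}}+\cdots+s_{m}\theta_{j_{m}}+\eta_{m+1}+\cdots+\eta_{d}\bigr)\,d\eta_{m+1}\cdots d\eta_{d}.
\]
The key point is a size estimate: when $m\ge 3$ we have $m+\alpha-1\ge 2+\alpha>2$, so by \eqref{E:asympF1} the function $H_{m+\alpha-1}(\varphi)$ is $O(\varphi^{2})$ uniformly for bounded $\varphi$ (and in fact $H_{\beta}(\varphi)=O(\varphi^{2})$ for all $\beta>2$, while $H_{2}(\varphi)=O(\varphi^2\log(1/|\varphi|))$ handled separately if it ever occurs). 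Since the argument $s_{1}\theta_{j_{1}}+\cdots+\eta_{d}$ is $O(\|\theta\|)$ on the domain of integration, $H_{m+\alpha-1}$ of it is $O(\|\theta\|^{2})$; each cotangent factor $\cot\tfrac{\theta_{j_{i}}}{2}$ is $O(1/|\theta_{j_{i}}|)$, there are $d-m$ of them, and the $(d-m)$-fold integral has volume $O(|\theta_{j_{m+1}}|\cdots|\theta_{j_{d}}|)$; these cancel, leaving the whole term $O(\|\theta\|^{2})$. For $m=1$ the factor is $H_{\alpha}$, which is exactly what appears in \eqref{E:asymp}; one simply notes $\cot\tfrac{\theta_{j_{i}}}{2} = \tfrac{2}{\theta_{j_{i}}}\bigl(1+O(\theta_{j_{i}}^{2})\bigr)$, so the prefactor $\tfrac{1}{2^{d-1}}\cot\tfrac{\theta_{j_{2}}}{2}\cdots\cot\tfrac{\theta_{j_{d}}}{2}$ equals $\tfrac{1}{\theta_{j_{2}}\cdots\theta_{j_{d}}}\bigl(1+O(\|\theta\|^{2})\bigr)$, and the integral factor is $O(\|\theta\|^{\min(\alpha,2)}\cdot|\theta_{j_{2}}|\cdots|\theta_{j_{d}}|)$ (using $H_{\alpha}=O(|\varphi|^{\alpha})$ for $\alpha<2$ and $O(\varphi^2\log(1/|\varphi|))$ for $\alpha=2$), so replacing the prefactor by $\tfrac{1}{\theta_{j_{2}}\cdots\theta_{j_{d}}}$ introduces an error of order $\|\theta\|^{2}$ (for $\alpha<2$) or $\|\theta\|^{4}\log(1/\|\theta\|)=O(\|\theta\|^2)$ (for $\alpha=2$), again absorbed into $O(\|\theta\|^{2})$. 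Collecting the surviving $m=1$ terms gives precisely the main term of \eqref{E:asymp}.

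The main obstacle, and the step deserving the most care, is making the size bounds on $H_{\beta}(\varphi)$ genuinely \emph{uniform} in $\varphi$ over the relevant bounded range rather than merely asymptotic as $\varphi\to 0$. From \eqref{E:asympF1} one knows the behavior of $H_{\beta}$ only at the origin, but the integrals range over $|\eta_{i}|\le\theta_{j_{i}}$ with $\theta$ small, so the argument of $H_{\beta}$ stays in a fixed small neighborhood of $0$, and on such a neighborhood $H_{\beta}$ is continuous with $H_{\beta}(0)=0$; combining continuity on a compact set with the local asymptotics yields a uniform bound $|H_{\beta}(\varphi)|\le C_{\beta}\varphi^{2}$ for $\beta>2$ and $|\varphi|\le\delta$, and similarly a uniform bound $|H_{\alpha}(\varphi)|\le C_{\alpha}|\varphi|^{\alpha}$ (resp.\ $\le C|\varphi|^{2}\log(e/|\varphi|)$ when $\alpha=2$) for $|\varphi|\le\delta$. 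One also has to check that the number of index sets $\{j_{k}\}\in\setJ_{m}$ and sign patterns $\{s_{k}\}\in\setU_{m}$ is a constant depending only on $d$, so summing finitely many $O(\|\theta\|^{2})$ terms stays $O(\|\theta\|^{2})$; this is immediate from the definitions. Once these uniform estimates are in place, the rest is bookkeeping, and \eqref{E:asymp} follows.
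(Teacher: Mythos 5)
Your proposal is correct and is essentially the paper's own argument: start from \eqref{E:Fd-fin}, absorb the entire cotangent (``polynomial'') family and all integral terms with $m\ge3$ into $O(\|\theta\|^{2})$ via $t\cot\frac{t}{2}=2-\frac{t^{2}}{6}+\dotsb$ and $H_{m+\alpha-1}(x)\lesssim x^{2}$, and replace the $m=1$ cotangent prefactor by $1/(\theta_{j_{2}}\cdots\theta_{j_{d}})$ at the cost of an $O(\|\theta\|^{2})$ error. The only step you leave implicit is that the sum over $s_{1}=\pm1$ collapses to the factor $2$ in \eqref{E:asymp} because $H_{\alpha}$ is even and the integration domain is symmetric; the paper additionally proves a matching lower bound $R_{1,j_{1},\ldots,j_{d},s_{1}}(\theta)\gtrsim H^{*}_{\alpha}(\|\theta\|)$, but that is needed later for Theorem~\ref{T:2}, not for this lemma.
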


\begin{proof}
Let us estimate the rate of growth of different summands in \eqref{E:Fd-fin}.

{\emph{First sum.}} We start with the estimation of the first external sum in
\eqref{E:Fd-fin}:
\[
F_{d}^{(1)}(\theta)=
2\sum_{m\in\setO_{d}}\sum_{\{j_{k}\}\in\setJ_{m}}\zeta(m+\alpha)
\Bigl(2^{d-m}-\Bigl(\theta_{j_{m+1}}\cot\frac{\theta_{j_{m+1}}}{2}\Bigr)
\cdots\Bigl(\theta_{j_{d}}\cot\frac{\theta_{j_{d}}}{2}\Bigr)\Bigr).
\]
From this inequality it is seen that $F_{d}^{(1)}(\theta)$ is a finite sum of
the terms
\[
2^{d-m}-\Bigl(\theta_{j_{m+1}}\cot\frac{\theta_{j_{m+1}}}{2}\Bigr)
\cdots\Bigl(\theta_{j_{d}}\cot\frac{\theta_{j_{d}}}{2}\Bigr)
\]
multiplied by some numerical factors. Since
$x\cot\frac{x}{2}=2-\frac{1}{6}x^{2}+\ldots$, where the dots denote the terms
of higher order of smallness, then
\begin{equation}\label{E:ctrprod}
0\le 2^{d-m}-\Bigl(\theta_{j_{m+1}}\cot\frac{\theta_{j_{m+1}}}{2}\Bigr)
\cdots\Bigl(\theta_{j_{d}}\cot\frac{\theta_{j_{d}}}{2}\Bigr)
\lesssim \theta_{j_{m+1}}^{2}+\cdots+\theta_{j_{d}}^{2}\lesssim \|\theta\|^{2}
\end{equation}
for sufficiently small values of $|\theta|$. Here for two functions
$h_{1}(x)$ and $h_{2}(x)$ we write $h_{1}(x)\lesssim h_{2}(x)$ if there
exists a constant $C<\infty$ such that $h_{1}(x)\le C h_{2}(x)$ in a
neighborhood of the point $x=0$; we also write $h_{1}(x)\gtrsim h_{2}(x)$ if
there exists a constant $c>0$ such that $h_{1}(x)\ge c h_{2}(x)$ in a
neighborhood of the zero point. Then \eqref{E:ctrprod} implies
\begin{equation}\label{E:Fd-fin1-ineq}
\Bigl|F_{d}^{(1)}(\theta)\Bigr|\lesssim \|\theta\|^{2}.
\end{equation}

\emph{Second sum.} Consider the second external sum in \eqref{E:Fd-fin}:
\begin{align*}
F_{d}^{(2)}(\theta)&=
\frac{1}{2^{d-1}}\sum_{m\in\setO_{d}}\sum_{\{j_{k}\}\in\setJ_{m}} \biggl(\cot\frac{\theta_{j_{m+1}}}{2}\cdots\cot\frac{\theta_{j_{d}}}{2}\\
&\begin{multlined}[b][0.8\linewidth]\quad\times\sum_{\{s_{k}\}\in\setU_{m}}\int\limits_{-\theta_{j_{d}}}^{\theta_{j_{d}}}\cdots
\int\limits_{\mathclap{-\theta_{j_{m+1}}}}^{\mathclap{\theta_{j_{m+1}}}}
H_{m+\alpha-1}(s_{1}\theta_{j_{1}}+\cdots+
s_{m}\theta_{j_{m}}\\+ \eta_{m+1}+\cdots+ \eta_{d})\,d \eta_{m+1}\ldots d \eta_{d}\biggr).
\end{multlined}
\end{align*}
Clearly $F_{d}^{(2)}(\theta)$ is a finite sum of the terms
\begin{multline}\label{E:Rm}
R_{m,j_{1},\ldots,j_{d},s_{1},\ldots,s_{m}}(\theta):=
\frac{1}{2^{d-1}}\cot\frac{\theta_{j_{m+1}}}{2}\cdots\cot\frac{\theta_{j_{d}}}{2}\\
\quad\times\int\limits_{-\theta_{j_{d}}}^{\theta_{j_{d}}}\cdots
\int\limits_{\mathclap{-\theta_{j_{m+1}}}}^{\mathclap{\theta_{j_{m+1}}}}
H_{m+\alpha-1}(s_{1}\theta_{j_{1}}+\cdots+
s_{m}\theta_{j_{m}}\\+ \eta_{m+1}+\cdots+ \eta_{d})\,d \eta_{m+1}\ldots d \eta_{d},
\end{multline}
where $\{s_{k}\}\in\setU_{m}$.

Analysis of behavior of the terms
$R_{m,j_{1},\ldots,j_{d},s_{1},\ldots,s_{m}}(\theta)$ will be different
depending on whether $m=1$ or $m=3,5,\ldots$. Let us start with the second of
these cases.

\emph{Case $m=3,5,\ldots$\,.} In this case $m+\alpha-1>2$ and then by
\eqref{E:asympF1} $0\le H_{m+\alpha-1}(x)\lesssim x^{2}$. Therefore
\begin{multline*}
0\le H_{m+\alpha-1}(s_{1}\theta_{j_{1}}+\cdots+
s_{m}\theta_{j_{m}}+ \eta_{m+1}+\cdots+ \eta_{d})\\
\lesssim
\bigl(|\theta_{j_{1}}|+\cdots+
|\theta_{j_{m}}|+ |\theta_{j_{m+1}}|+\cdots+ |\theta_{j_{d}}|\bigr)^{2}
\lesssim \|\theta\|^{2}
\end{multline*}
for all $\eta_{m+1}\in[-\theta_{j_{m+1}},\theta_{j_{m+1}}],\ldots,
\eta_{d}\in[-\theta_{j_{d}},\theta_{j_{d}}]$. Then for all sufficiently small
values of $\|\theta\|$ we have
\begin{multline*}
0\le R_{m,j_{1},\ldots,j_{d},s_{1},\ldots,s_{m}}(\theta)\lesssim
\cot\frac{\theta_{j_{m+1}}}{2}\cdots\cot\frac{\theta_{j_{d}}}{2}
\int\limits_{-\theta_{j_{d}}}^{\theta_{j_{d}}}\cdots
\int\limits_{\mathclap{-\theta_{j_{m+1}}}}^{\mathclap{\theta_{j_{m+1}}}}
\|\theta\|^{2}\,d \eta_{m+1}\ldots d \eta_{d}\\
=\Bigl(2\theta_{j_{m+1}}\cot\frac{\theta_{j_{m+1}}}{2}\Bigr)
\cdots\Bigl(2\theta_{j_{d}}\cot\frac{\theta_{j_{d}}}{2}\Bigr)\|\theta\|^{2}
\lesssim\|\theta\|^{2},
\end{multline*}
since each factor $2\theta_{j_{k}}\cot\frac{\theta_{j_{k}}}{2}$ is bounded
for all sufficiently small values of $\theta_{j_{k}}$. Hence
\begin{equation}\label{E:Rm3}
0\le R_{m,j_{1},\ldots,j_{d},s_{1},\ldots,s_{m}}(\theta)\lesssim
\|\theta\|^{2}.
\end{equation}

\emph{Case $m=1$.} By verbatim repetition of the estimates from the previous
case with usage of the representation \eqref{E:asympF1} we obtain, for
$0<\alpha\le 2$, that
\begin{equation}\label{E:Rm1up}
0\le R_{1,j_{1},\ldots,j_{d},s_{1}}(\theta)\lesssim
H^{*}_{\alpha}(\|\theta\|).
\end{equation}
Let us show that the inverse estimate is also valid:
\begin{equation}\label{E:Rm1low}
R_{1,j_{1},\ldots,j_{d},s_{1}}(\theta)\gtrsim
H^{*}_{\alpha}(\|\theta\|).
\end{equation}

By \eqref{E:asympF1} there exists a $q>0$ such that $H_{\alpha}(x)\ge q
H^{*}_{\alpha}(x)$ for all sufficiently small values of $x\ge0$. Then, taking
into account that by assumption all the coordinates of the vector $\theta$
are nonnegative, we obtain
\[
R_{1,j_{1},\ldots,j_{d},s_{1}}(\theta)
\ge q
\cot\frac{\theta_{j_{2}}}{2}\cdots\cot\frac{\theta_{j_{d}}}{2}\int\limits_{-\theta_{j_{d}}}^{\theta_{j_{d}}}\cdots
\int\limits_{\mathclap{-\theta_{j_{2}}}}^{\mathclap{\theta_{j_{2}}}}
H^{*}_{\alpha}(s_{1}\theta_{j_{1}}+\eta_{2}+\cdots+ \eta_{d})\,d \eta_{2}\ldots d \eta_{d},
\]
Since here the function $H^{*}_{\alpha}(x)$ is even then the right-hand side
does not depend on the sign of $s_{1}=\pm1$, and therefore
\[
R_{1,j_{1},\ldots,j_{d},s_{1}}(\theta)
\ge q
\cot\frac{\theta_{j_{2}}}{2}\cdots\cot\frac{\theta_{j_{d}}}{2}\int\limits_{-\theta_{j_{d}}}^{\theta_{j_{d}}}\cdots
\int\limits_{\mathclap{-\theta_{j_{2}}}}^{\mathclap{\theta_{j_{2}}}}
H^{*}_{\alpha}(\theta_{j_{1}}+\eta_{2}+\cdots+ \eta_{d})\,d \eta_{2}\ldots d \eta_{d},
\]
Since the integrand here is nonnegative then, by appropriate reducing the
region of integration, we can obtain the following inequality:
\[
R_{1,j_{1},\ldots,j_{d},s_{1}}(\theta)
\ge q
\cot\frac{\theta_{j_{2}}}{2}\cdots\cot\frac{\theta_{j_{d}}}{2}\int\limits_{\theta_{j_{d}}/2}^{\theta_{j_{d}}}\cdots
\int\limits_{\mathclap{\theta_{j_{2}}/2}}^{\mathclap{\theta_{j_{2}}}}
H^{*}_{\alpha}(\theta_{j_{1}}+\eta_{2}+\cdots+ \eta_{d})\,d \eta_{2}\ldots d \eta_{d},
\]
from which, in view of monotone non-decreasing of the function
$H^{*}_{\alpha}(x)$ for small $x\ge0$, it follows that
\[
R_{1,j_{1},\ldots,j_{d},s_{1}}(\theta)
\ge q
\cot\frac{\theta_{j_{2}}}{2}\cdots\cot\frac{\theta_{j_{d}}}{2}\int\limits_{\theta_{j_{d}}/2}^{\theta_{j_{d}}}\cdots
\int\limits_{\mathclap{\theta_{j_{2}}/2}}^{\mathclap{\theta_{j_{2}}}}
H^{*}_{\alpha}\Bigl(\theta_{j_{1}}+\frac{\theta_{j_{2}}}{2}+\cdots+ \frac{\theta_{j_{d}}}{2}\Bigr)\,d \eta_{2}\ldots d \eta_{d},
\]
because $\eta_{k}\ge \frac{\theta_{j_{k}}}{2}$, $k=2,\ldots,d$, for
$\eta_{k}\in\bigl[\frac{\theta_{j_{k}}}{2},\theta_{j_{k}}\bigr]$. Then
\begin{multline*}
R_{1,j_{1},\ldots,j_{d},s_{1}}(\theta)
\ge q
\Bigl(\frac{\theta_{j_{2}}}{2}\cot\frac{\theta_{j_{2}}}{2}\Bigr)
\cdots
\Bigl(\frac{\theta_{j_{d}}}{2}\cot\frac{\theta_{j_{d}}}{2}\Bigr)
H^{*}_{\alpha}\Bigl(\theta_{j_{1}}+\frac{\theta_{j_{2}}}{2}+\cdots+ \frac{\theta_{j_{d}}}{2}\Bigr)\\
\ge
q\Bigl(\frac{\theta_{j_{2}}}{2}\cot\frac{\theta_{j_{2}}}{2}\Bigr)
\cdots
\Bigl(\frac{\theta_{j_{d}}}{2}\cot\frac{\theta_{j_{d}}}{2}\Bigr)
H^{*}_{\alpha}\Bigl(\frac{\|\theta\|}{2}\Bigr),
\end{multline*}
which, for small values of $\|\theta\|$, implies~\eqref{E:Rm1low}.

Relations~\eqref{E:Rm1up} and \eqref{E:Rm1low} can be united in the following
one:
\begin{equation}\label{E:Rm1fin}
R_{1,j_{1},\ldots,j_{d},s_{1}}(\theta)\sim
H^{*}_{\alpha}(\|\theta\|).
\end{equation}

In conclusion let us simplify the expression for
$R_{1,j_{1},\ldots,j_{d},s_{1}}(\theta)$ by multiplying and then dividing the
right-hand part of \eqref{E:Rm} on
$\frac{\theta_{j_{m+1}}}{2}\cdots\frac{\theta_{j_{d}}}{2}$:
\begin{equation}\label{E:rm-via-Rm*}
R_{1,j_{1},\ldots,j_{d},s_{1}}(\theta)=
\Bigl(\frac{\theta_{j_{m+1}}}{2}\cot\frac{\theta_{j_{m+1}}}{2}\Bigr)
\cdots\Bigl(\frac{\theta_{j_{d}}}{2}\cot\frac{\theta_{j_{d}}}{2}\Bigr)R^{*}_{1,j_{1},\ldots,j_{d},s_{1}}(\theta)
\end{equation}
where
\begin{multline}\label{E:Rm*}
R^{*}_{m,j_{1},\ldots,j_{d},s_{1},\ldots,s_{m}}(\theta):=
\frac{1}{\theta_{j_{m+1}}\cdots\theta_{j_{d}}}\int\limits_{-\theta_{j_{d}}}^{\theta_{j_{d}}}\cdots
\int\limits_{\mathclap{-\theta_{j_{m+1}}}}^{\mathclap{\theta_{j_{m+1}}}}
H_{m+\alpha-1}(s_{1}\theta_{j_{1}}+\cdots+
s_{m}\theta_{j_{m}}\\
+ \eta_{m+1}+\cdots+ \eta_{d})\,d \eta_{m+1}\ldots d \eta_{d},
\end{multline}

By \eqref{E:ctrprod}
\[
\Bigl(\frac{\theta_{j_{m+1}}}{2}\cot\frac{\theta_{j_{m+1}}}{2}\Bigr)
\cdots\Bigl(\frac{\theta_{j_{d}}}{2}\cot\frac{\theta_{j_{d}}}{2}\Bigr)
=1+O(\|\theta\|^{2}),
\]
where $O(\|\theta\|^{2})$ is a term whose order of smallness is not less than
two. Then \eqref{E:Rm1fin} implies
\begin{equation}\label{E:Rm*fin}
R^{*}_{1,j_{1},\ldots,j_{d},s_{1}}(\theta)\sim
H^{*}_{\alpha}(\|\theta\|)
\end{equation}
and
\begin{equation}\label{E:rm-via-Rm*-fin}
R_{1,j_{1},\ldots,j_{d},s_{1}}(\theta)=R^{*}_{1,j_{1},\ldots,j_{d},s_{1}}(\theta)
+O(\|\theta\|^{2}).
\end{equation}
Remark that by the definition \eqref{E:asympF1} the quantity
$H^{*}_{\alpha}(\|\theta\|)$, for $0<\alpha<2$ and small values of
$\|\theta\|$, has the order of growth as $\|\theta\|^{\alpha}$ while for
$\alpha=2$ its order of growth is $\|\theta\|^{2}\log\frac{1}{\|\theta\|}$.

\emph{Final asymptotics.} Relations~\eqref{E:Fd-fin1-ineq}, \eqref{E:Rm3},
\eqref{E:Rm1fin}, \eqref{E:Rm*}, \eqref{E:Rm*fin} and
\eqref{E:rm-via-Rm*-fin} show that in the case $0<\alpha\le 2$ the
asymptotics of the function $F_{d}(\theta)$ as $\theta\to 0$ is determined by
the terms $R^{*}_{1,j_{1},\ldots,j_{d},s_{1}}(\theta)$. All the other terms
in~\eqref{E:Fd-fin} have higher order of smallness at the origin which is not
less than two. Therefore
\begin{multline*}
F_{d}(\theta)=
\sum_{\{j_{k}\}\in\setJ_{1}} \biggl(\frac{1}{\theta_{j_{2}}\cdots\theta_{j_{d}}}\\
\times\sum_{s_{1}=\pm1}\int\limits_{-\theta_{j_{d}}}^{\theta_{j_{d}}}\cdots
\int\limits_{\mathclap{-\theta_{j_{2}}}}^{\mathclap{\theta_{j_{2}}}}
H_{\alpha}(s_{1}\theta_{j_{1}}
+\eta_{2}+\cdots+ \eta_{d})\,d \eta_{2}\ldots d \eta_{d}\biggr)
+O(\|\theta\|^{2}).
\end{multline*}
At last, taking into account an earlier remark that the case $s_{1}=-1$ is
reduced to the case $s_{1}=1$, we obtain (by doubling the number of summands
corresponding to the case $s_{1}=1$) representation~\eqref{E:asymp}.
Lemma~\ref{L:asympFd} is proved.
\end{proof}

Let us complete the proof of theorem.

\emph{Case $0<\alpha<2$.} To obtain the required asymptotic representation
\eqref{E:asympA} in this case it suffices to substitute the asymptotic
expression \eqref{E:asympF1} for the function $H_{\alpha}(\cdot)$ in equality
\eqref{E:asymp} from Lemma~\ref{L:asympFd}. However one should bear in mind
that formula \eqref{E:asympF1} does not provide any estimates for smallness
of the ``reminder terms'' in the asymptotic representation of the integrand
function $H_{\alpha}(\cdot)$. Owing to this we cannot affirm that the
reminder terms in \eqref{E:asympA} are of the order $O(|\theta|^{2})$ and
would have to prove a weaker asymptotic equality.

\emph{Case $\alpha=2$.} As in the previous case, by substituting the
asymptotic expression \eqref{E:asympF1} for the function $H_{\alpha}(\cdot)$
with $\alpha=2$ in equality \eqref{E:asymp} from Lemma~\ref{L:asympFd} we
obtain:
\begin{equation}\label{E:Fd-lapha2}
F_{d}(\theta)\simeq
\sum_{\{j_{k}\}\in\setJ_{1}} \biggl(\frac{1}{\theta_{j_{2}}\cdots\theta_{j_{d}}}
\int\limits_{-\theta_{j_{d}}}^{\theta_{j_{d}}}\cdots
\int\limits_{\mathclap{-\theta_{j_{2}}}}^{\mathclap{\theta_{j_{2}}}}
L(\theta_{j_{1}}+\eta_{2}+\cdots+ \eta_{d})\,d \eta_{2}\ldots d \eta_{d}\biggr),
\end{equation}
where $L(x):=x^{2}\log\Bigl(\frac{1}{|x|}\Bigr)$ for small $x\in\mathbb{R}$.
By using the directly verifiable equality
\[
L(x)=y^{2}L\Bigl(\frac{x}{y}\Bigr)+x^{2}\log\Bigl(\frac{1}{y}\Bigr),
\quad x,y\in\mathbb{R},~ y\neq0,
\]
represent the integrand term in \eqref{E:Fd-lapha2} in the following form:
\begin{multline*}
L(\theta_{j_{1}}+\eta_{2}+\cdots+ \eta_{d})\\=
|\theta|^{2}L\Bigl(\frac{\theta_{j_{1}}+\eta_{2}+\cdots+ \eta_{d}}{|\theta|}\Bigr)
+
(\theta_{j_{1}}+\eta_{2}+\cdots+ \eta_{d})^{2}\log\Bigl(\frac{1}{|\theta|}\Bigr),
\end{multline*}
where $|\theta|$ is the Euclidean norm of the vector $\theta$. Here
$|\theta_{j_{1}}+\eta_{2}+\cdots+ \eta_{d}|\le d|\theta|$ for all
$\theta_{j_{1}}$, $|\eta_{2}|\le \theta_{j_{2}},\ldots, |\eta_{d}|\le
\theta_{j_{d}}$, and therefore by continuity of the function $L(x)$
\[
\left|L\Bigl(\frac{\theta_{j_{1}}+\eta_{2}+\cdots+ \eta_{d}}{|\theta|}\Bigr)\right|\le
L^{*}
\]
for some $L^{*}<\infty$.

Hence equality \eqref{E:Fd-lapha2} is reduced to the form
\begin{multline*}
F_{d}(\theta)\simeq
\log\Bigl(\frac{1}{|\theta|}\Bigr)\sum_{\{j_{k}\}\in\setJ_{1}} \biggl(\frac{1}{\theta_{j_{2}}\cdots\theta_{j_{d}}}\\
\times
\int\limits_{-\theta_{j_{d}}}^{\theta_{j_{d}}}\cdots
\int\limits_{\mathclap{-\theta_{j_{2}}}}^{\mathclap{\theta_{j_{2}}}}
(\theta_{j_{1}}+\eta_{2}+\cdots+ \eta_{d})^{2}\,d \eta_{2}\ldots d \eta_{d}\biggr)+O(|\theta|^{2}),
\end{multline*}
and we needed only to calculate the integrals in this last representation. By
expanding each integrand we obtain that
\[
(\theta_{j_{1}}+\eta_{2}+\cdots+ \eta_{d})^{2}=
\theta_{j_{1}}^{2}+\eta_{2}^{2}+\cdots+ \eta_{d}^{2}+\ldots,
\]
where the dots denote the mixed products of variables
$\theta_{j_{1}},\eta_{2},\ldots,\eta_{d}$. Since each such mixed product is
an odd function in its variables then integrating of such a function gives a
zero contribution to the integral. From here
\begin{multline*}
F_{d}(\theta)\simeq
\log\Bigl(\frac{1}{|\theta|}\Bigr)\sum_{\{j_{k}\}\in\setJ_{1}} \biggl(\frac{1}{\theta_{j_{2}}\cdots\theta_{j_{d}}}\\
\times\int\limits_{-\theta_{j_{d}}}^{\theta_{j_{d}}}\cdots
\int\limits_{\mathclap{-\theta_{j_{2}}}}^{\mathclap{\theta_{j_{2}}}}
(\theta_{j_{1}}^{2}+\eta_{2}^{2}+\cdots+ \eta_{d}^{2})\,d \eta_{2}\ldots d \eta_{d}\biggr)+O(|\theta|^{2}),
\end{multline*}
Now the obtained integrals can be computed directly:
\begin{align*}
F_{d}(\theta)&\simeq
\log\Bigl(\frac{1}{|\theta|}\Bigr)\sum_{\{j_{k}\}\in\setJ_{1}}2^{d-1}
\Bigl(\theta_{j_{1}}^{2}+\frac{1}{3}\theta_{j_{2}}^{2}+\cdots+ \frac{1}{3}\theta_{j_{d}}^{2}\Bigr)+O(|\theta|^{2})\\
&\begin{multlined}[b][0.8\linewidth]=
2^{d-1}\Bigl(1+\frac{1}{3}(d-1)\Bigr)(\theta_{1}^{2}+\theta_{2}^{2}+\cdots+\theta_{d}^{2})\log\Bigl(\frac{1}{|\theta|}\Bigr)+O(|\theta|^{2})\\
\simeq \frac{2^{d-1}(d+2)}{3}|\theta|^{2}\log\Bigl(\frac{1}{|\theta|}\Bigr).
\end{multlined}
\end{align*}
The theorem is proved in the case $\alpha=2$.

\emph{Case $\alpha>2$.} In this case the function $F_{d}(\theta)$ is twice
continuously differentiable and has a minimum at the point $\theta=0$.
Therefore its asymptotics is determined by terms of the second or higher
order of smallness in its Taylor expansion. Unfortunately it is not too easy
to find the required coefficients of the Taylor series directly from formula
\eqref{E:defFd}. Therefore we will make use of representation
\eqref{E:Fdview} for $F_{d}(\theta)$ or, what is the same, of representation
\eqref{E:fd}, \eqref{E:fd-term} from Lemma~\ref{L:Fdview}. By
\eqref{E:Fdview}, to compute the quadratic terms of the function
$F_{d}(\theta)$ it suffices to find for each $n=1,2,\ldots$ the quadratic
terms of the functions
\[
\tilde{S}_{n}(\theta)=\prod_{j=1}^{d}\frac{\sin(n+\frac{1}{2})\theta_{j}}{\sin\frac{\theta_{j}}{2}}-
\prod_{j=1}^{d}\frac{\sin(n-\frac{1}{2})\theta_{j}}{\sin\frac{\theta_{j}}{2}}
\]

By using the following expansions
\begin{align*}
\frac{\sin(n+\frac{1}{2})x}{{\sin\frac{x}{2}}}&=
(2n+1)\Bigl(1-\frac{n^{2}+n}{6}x^{2}\Bigr)+\dotsb,\\
\frac{\sin(n-\frac{1}{2})x}{{\sin\frac{x}{2}}}&=
(2n-1)\Bigl(1-\frac{n^{2}-n}{6}x^{2}\Bigr)+\dotsb,
\end{align*}
where the dots stand for the terms of higher than second order of smallness,
we get:
\begin{align*}
\tilde{S}_{n}(\theta)&=(2n+1)^{d}-(2n-1)^{d}\\
&\quad-\frac{1}{6}\bigl((2n+1)^{d}(n^{2}+n)-(2n-1)^{d}(n^{2}-n)\bigr)
(\theta_{1}^{2}+\cdots+\theta_{d}^{2})+\dotsb\\
&\begin{multlined}[b][0.8\linewidth]=(2n+1)^{d}-(2n-1)^{d}\\
-\frac{1}{6}\bigl((2n+1)^{d}(n^{2}+n)-(2n-1)^{d}(n^{2}-n)\bigr)
|\theta|^{2}+\dotsb .
\end{multlined}
\end{align*}

Substituting the obtained expression in \eqref{E:Fdview} and making there
summation over the terms up to the second order of smallness inclusive we
obtain the required representation \eqref{E:asympA}. Theorem~\ref{T:2} is
proved.

\section{Proof of Theorem~\ref{T:3}}\label{S:proofT4}

Let us note that for each integer $m\ge1$ and real $\alpha>0$ the following
equality holds:
\begin{equation}\label{E:diff}
\frac{d}{dx}(x^{m}|x|^{\alpha})= (\alpha+m)x^{m-1}|x|^{\alpha}.
\end{equation}
To prove this equality it suffices to represent the factor $|x|^{\alpha}$ as
$|x|^{\alpha}=(x^{2})^{\frac{\alpha}{2}}$ and then make termwise
differentiation of the obtained expression routinely. Then
\[
\frac{d^{k}}{dx^{k}}(x^{m}|x|^{\alpha})=
(\alpha+m-k+1)\cdots(\alpha+m)x^{m-k}|x|^{\alpha},
\]
for $k=1,2,\ldots,m$ and, in particular,
\[
\frac{d^{m}}{dx^{m}}(x^{m}|x|^{\alpha})=
(\alpha+1)\cdots(\alpha+m)|x|^{\alpha}.
\]
Therefore
\begin{multline*}
\int\limits_{-x_{m}}^{x_{m}}\cdots
\int\limits_{\mathclap{-x_{2}}}^{\mathclap{x_{2}}}
|x_{1}+ y_{2}+\cdots+ y_{m}|^{\alpha}\,d y_{2}\ldots d y_{m}\\
=\frac{1}{(\alpha+1)\cdots(\alpha+m)}\sum_{s_{2},\ldots,s_{m}=\pm1}s_{2}\cdots s_{m}(x_{1}+ s_{2}x_{2}+\cdots+ s_{m}x_{m})^{(m,\alpha)},
\end{multline*}
from which \eqref{E:asympSA} follows. Theorem~\ref{T:3} is proved.

\section{Proof of Theorem~\ref{T:genas}}\label{S-proofG}

Given arbitrary $\varepsilon>0$, by the condition of Theorem~\ref{T:genas}
such a $\rho=\rho(\varepsilon)$ can be found that
$\Bigl|1-a_{z}\|z\|^{d+\alpha}\Bigr|\le\varepsilon$ for $\|z\|\ge\rho$. Then
\begin{multline*}
1-\frac{F(\theta)}{F_{d}(\theta)}=\frac{F_{d}(\theta)-F(\theta)}{F_{d}(\theta)}=
\frac{1}{F_{d}(\theta)}\sum_{z:\|z\|<\rho}
\Bigl(\frac{1}{\|z\|^{d+\alpha}}-a_{z}\Bigr)(1-\cos\langle z,\theta\rangle)\\+
\frac{1}{F_{d}(\theta)}\sum_{z:\|z\|\ge\rho}
\Bigl(\frac{1}{\|z\|^{d+\alpha}}-a_{z}\Bigr)(1-\cos\langle z,\theta\rangle)
\end{multline*}
Here the first summand in the right-hand part tends to zero as $\theta\to 0$
since by Theorem~\ref{T:1} $F_{d}(\theta)\sim \|\theta\|^{\alpha}$ in the
case $0<\alpha<2$ and $F_{d}(\theta)\sim
\log(\frac{1}{\|\theta\|})\|\theta\|^{2}$ in the case $\alpha=2$ for small
values of $\|\theta\|$, whereas all the terms under the summation sign, the
number of which is finite, have quadratic order of smallness at zero.

The second summand does not exceed $\varepsilon$ by absolute value since by
the choice of $\rho$ we have:
\begin{multline*}
\left|\frac{1}{F_{d}(\theta)}\sum_{z:\|z\|\ge\rho}
\Bigl(\frac{1}{\|z\|^{d+\alpha}}-a_{z}\Bigr)(1-\cos\langle z,\theta\rangle)\right|\\
\quad\le
\frac{1}{F_{d}(\theta)}\sum_{z:\|z\|\ge\rho}
\Bigl|1-a_{z}\|z\|^{d+\alpha}\Bigr|\frac{1-\cos\langle z,\theta\rangle}{\|\theta\|^{d+\alpha}}\\
\le
\varepsilon\frac{1}{F_{d}(\theta)}\sum_{z:\|z\|\ge\rho}
\frac{1-\cos\langle z,\theta\rangle}{\|\theta\|^{d+\alpha}}
\le\varepsilon\frac{F_{d}(\theta)}{F_{d}(\theta)}
\le\varepsilon.
\end{multline*}
Hence
\[
\limsup_{\theta\to 0}\left|1-\frac{F(\theta)}{F_{d}(\theta)}\right|\le\varepsilon,
\]
and in view of arbitrariness of $\varepsilon$ we obtain that $F(\theta)\simeq
F_{d}(\theta)$. Theorem~\ref{T:genas} is proved.

\section*{Acknowledgments}
The research was carried out at the Institute for Information Transmission
Problems, Russian Academy of Science, at the expense of the Russian
Foundation for Sciences (project ¹ 14-50-00150).

The author is grateful to E. Yarovaya, who drew his attention to the
importance of calculating the Hardy type asymptotics of multi-variable
trigonometric series in probability theory problems and thereby stimulated
interest in the issues under discussion.


  \providecommand{\bbljan}[0]{January}
  \providecommand{\bblfeb}[0]{February}
  \providecommand{\bblmar}[0]{March}
  \providecommand{\bblapr}[0]{April}
  \providecommand{\bblmay}[0]{May}
  \providecommand{\bbljun}[0]{June}
  \providecommand{\bbljul}[0]{July}
  \providecommand{\bblaug}[0]{August}
  \providecommand{\bblsep}[0]{September}
  \providecommand{\bbloct}[0]{October}
  \providecommand{\bblnov}[0]{November}
  \providecommand{\bbldec}[0]{December}

\end{document}